\newcommand{\alert}[1]{\textbf{\color{red}
[[[#1]]]}\marginpar{\textbf{\color{red}**}}\typeout{ALERT:
\the\inputlineno: #1}}
\newtheorem{theorem}{Theorem}
\newtheorem{definition}{Definition}
\newtheorem{lemma}[theorem]{Lemma}
\newtheorem{proposition}[theorem]{Proposition}
\date{}
\title{The minimal degree of permutation representations of finite groups\\Amirim Honors Program final project}
\author{Oren Becker\\Advisor: Prof. Alexander Lubotzky\\The Hebrew University of Jerusalem, Israel}
\begin{document}
\maketitle

\section{Introduction}
In this thesis we study the following property, $\mu(G)$, of a finite group $G$:
\begin{definition}
$\mu(G) = min\{n \mid G \text{ embeds in } S_n\}$.
\end{definition}
By Cayley's theorem, $\mu(G) \leq |G|$.
We start, after the introduction, with an explicit formula for $\mu(G)$ when $G$ is abelian.
This formula and its proof first appeared in \cite{johnson71}. We give a different proof.
The formula shows that for abelian groups $G$ and $H$,
\begin{equation} \label{equation_alex_direct_product_equality}
\mu(G \times H) = \mu(G) + \mu(H)
\end{equation}

The equality \eqref{equation_alex_direct_product_equality} was established in \cite{wright75} for nilpotent groups
(and even more: for groups $G$ which contain a nilpotent subgroup $G_0$ such that $\mu(G)=\mu(G_0)$).
We extend it in Section \ref{section_additivity_for_central_socle}
to the class $CS$ of groups for which the socle is central
(and even more: for groups $G$ which contain a subgroup $G_0$ which belongs to $CS$ such that $\mu(G)=\mu(G_0)$).
We also study when $\mu(G)=|G|$ and begin to explore the compression ratio $cr(G)=\frac{|G|}{\mu(G)}$.
In \cite{johnson71}, it was determined when $cr(G)=1$. We refine it by showing that if $cr(G) > 1$ then $cr(G) \geq 1.2$
(this bound is tight).

\section{Background: Permutation Representations} \label{section_background}
Given a finite group G. A homomorphism $\rho:G \rightarrow S_n$ is called a permutation representation of G. In case $\rho$ is a monomorphism, we say $\rho$ is a faithful representation. The number n is called the degree of the representation $\rho$.
Any subgroup H $\leq$ G induces a transitive permutation representation of G by the action of G on the left cosets of H. That is, it induces a representation $\rho:G \rightarrow S_{Sym(G/H)}$, defined by $\rho(g) = (xH \mapsto gxH)$ for any $g \in G$. The degree of $\rho$ is $|G/H|=[G:H]$. The representation $\rho$ is faithful if and only if $Core_G(H) = 1$.
More generally, any multiset $\{H_1,...,H_m\}$ of subgroups of G induces a representation $\rho:G \rightarrow S_{Sym(G/H_1)} \times \dotsm \times S_{Sym(G/H_m)} \hookrightarrow S_{[G:H_1]+\dotsm+[G:H_m]}$ defined by $\rho(g) = ((x_1H_1, ..., x_mH_m) \mapsto (gx_1H_1, ..., gx_mH_m))$ for any $g \in G$. The representation $\rho$ is faithful if and only if $core_G(\cap_{i=1}^m H_i) = \cap_{i=1}^m core_G(H_i) = 1$.
The degree of $\rho$ is $\sum_{i=1}^m [G:H_i]$ and $\rho$ has $m$ transitive consituents.
Moreover, any permutation reprsentation of G is equivalent to a permutation representation induced by some multiset of subgroups in the way described above: Given a permutation representation $\rho$, an equivalent representation is induced by $\{H_1,...,H_m\}$, where $H_i$ is the point stabilizer of $\alpha_i$ and $\{\alpha_1,\dotsc,\alpha_m\}$ are representatives of the transitive consituents of $\rho$.
This correspondence between permutation representations and multisets of subgroups allows us to refer to such multisets
as a permutation representation and vice versa. We will use both viewpoints interchangeably.
A more detailed description of these basic results can be found in \cite{aschbacher86} (Chapter 2, p. 13).

\section{The Basics} \label{section_basics}
Given a representation $R=\{H_1,\dotsc,H_m\}$ of a finite group $G$, we denote by
$\mu_G(R)$ the degree of $R$ as a representation of $G$.
By the discussion in Section \ref{section_background} we have
$\mu_G(R) = \sum_{i=1}^n[G:H_i]$.
Thus we have a formula for the function $\mu$ given by
$\mu(G) = \min\{\sum_{H \in R}[G:H] \mid R \text{ is a collection of subgroups of $G$ with
$\cap_{H \in R} core_G(H) = 1$}\}$.

For any two nontrivial finite groups $G$ and $H$, we have $\mu(G \times H) \leq \mu(G) + \mu(H)$ because
for any pair of faithful representations, $R_1=\{G_1,\dotsc,G_n\}$ and $R_2=\{H_1,\dotsc,H_m\}$, of $G$ and $H$ respectively, we can construct the faithful representation $R=\{G_1 \times H,\dotsc,G_n \times H, G \times H_1, \dotsc, G \times G_m\}$ of $G \times H$, and $\mu_{G \times H}(R)=\mu_G(R_1)+\mu_H(R_2)$.

We proceed to explore some interaction between a representation of a group and representations of its subgroups
and then more specifically - between a representation of a direct product and representations of each of its factors.
One natural way to get a representation of $H$
is to restrict the representation of $G$ to the elements of $H$.
We now define a different way to induce a representation on a subgroup that will be useful for our purposes.
\begin{definition}[induced representation] \label{definition_induced_representation}
Let $G$ be a finite group, let $R=\{G_1,\dotsc,G_n\}$ be a representation of $G$ and let $H \leq G$ be a subgroup of $G$.
Then the representation $R_H=\{G_1 \cap H,\dotsc,G_n \cap H\}$ of H is called the \emph{induced representation by $R$ on $H$}.
\end{definition}

Warning: Even if $R$ is a faithful representation of $G$, $R_H$ is not necessarily a faithful representation of $H$.
For example, consider $G=S_3$, $R=\{\langle (1\text{ }2) \rangle\}$ and $H = \langle(1\text{ }2)\rangle$.


\begin{definition}[faithful decomposition] \label{definition_partitions_faithfully}
Let $G_1,\dotsc,G_n$ be finite groups and let $R$ be a faithful representation of $\prod_{i=1}^n G_i$.
We say that $(G_1,\dotsc,G_n)_R$ admits a \emph{faithful decomposition} as $R=\uplus_{i=1}^n R^{(i)}$
if for each $1 \leq i \leq n$, $R^{(i)}=\{\pi_i^{-1}(G^{(i)}_1),\dotsc,\pi_i^{-1}(G^{(i)}_{n_i})\}$
for some faithful representation $\{G^{(i)}_1,\dotsc,G^{(i)}_{n_i})\}$ of $G_i$.
\end{definition}

\begin{definition}[weak faithful decomposition] \label{definition_weakly_partitions_faithfully}
Let $G_1,\dotsc,G_n$ be finite groups and let $R$ be a faithful representation of $\prod_{i=1}^n G_i$.
We say that $(G_1,\dotsc,G_n)_R$ admits a \emph{weak faithful decomposition} as $R=\uplus_{i=1}^n R^{(i)}$
if for each $1 \leq i \leq n$, the induced representation $R^{(i)}_{G_i}$ is a \emph{faithful} representation of $G_i$.
\end{definition}

It is easy to see that if $(G_1,\dotsc,G_n)_R$ admits a faithful decomposition then it also admits
a weak faithful decomposition as the names imply.
If $G$ and $H$ are nontrivial finite groups and $(G,H)_R$ admits a
faithful decomposition as $R=R' \uplus R''$, we immediately conclude that
\begin{equation} \label{eq_direct_product_faithful_decomposition}\mu_{G \times H}(R) =
\mu_G(R_G') + \mu_H(R_H'') \end{equation}.
We now show that even if we only require $(G,H)_R$ to admit a \emph{weak} faithful decomposition,
we still get one inequality between the two sides of equality \eqref{eq_direct_product_faithful_decomposition}.

\begin{lemma}[weak decomposition inequality] \label{lemma_weak_partition_inequality}
Let $G$ and $H$ be nontrivial finite groups such that $(G, H)_R$ admits a weak faithful decomposition
as $R=R' \uplus R''$.
Then $\mu_{G \times H}(R) \geq \mu_G(R_G') + \mu_H(R_H'')$.
\end{lemma}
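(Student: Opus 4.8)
The plan is to prove the inequality term by term, reducing the whole statement to one elementary fact about indices relative to a normal subgroup. Since $R = R' \uplus R''$ and the degree $\mu_{G\times H}(\cdot) = \sum_{K}[G\times H:K]$ is additive over multiset union, we have $\mu_{G\times H}(R) = \mu_{G\times H}(R') + \mu_{G\times H}(R'')$, so it suffices to prove the two one-sided inequalities $\mu_{G\times H}(R') \ge \mu_G(R'_G)$ and $\mu_{G\times H}(R'') \ge \mu_H(R''_H)$ and then add them. Note that the faithfulness of $R'_G$ and $R''_H$ built into the hypothesis of a weak faithful decomposition is not actually needed for this inequality; it only guarantees that the two quantities on the right-hand side are degrees of genuinely faithful representations.

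The key observation I would use is the following: if $L$ is a finite group, $N \trianglelefteq L$, and $K \le L$ is an arbitrary subgroup, then $[L:K] \ge [N : K\cap N]$. This follows in two lines: by the tower law $[L:K] = [L:KN]\,[KN:K]$, while the product formula $|KN| = |K|\,|N|/|K\cap N|$ (valid since $N\trianglelefteq L$, so that $KN$ is a subgroup) gives $[KN:K] = |N|/|K\cap N| = [N:K\cap N]$; hence $[L:K] = [L:KN]\,[N:K\cap N] \ge [N:K\cap N]$. I would then apply this with $L = G\times H$ and the normal subgroup $N = G\times\{1\}$: for each $K \in R'$ it yields $[G\times H : K] \ge [\,G\times\{1\} : K\cap(G\times\{1\})\,]$, and under the canonical identification $G\cong G\times\{1\}$ the right-hand side is precisely the index in $G$ of the corresponding member of the induced representation $R'_G$ of Definition~\ref{definition_induced_representation}. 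Summing over $K\in R'$ gives $\mu_{G\times H}(R') = \sum_{K\in R'}[G\times H:K] \ge \sum_{K\in R'}[G : K\cap(G\times\{1\})] = \mu_G(R'_G)$. Repeating the argument verbatim with the normal subgroup $\{1\}\times H$ gives $\mu_{G\times H}(R'') \ge \mu_H(R''_H)$, and adding the two inequalities finishes the proof.

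I do not expect a genuine obstacle here: the entire content is the index inequality $[L:K] \ge [N:K\cap N]$, which is immediate from the product formula and the tower law. The only points that demand a little care are bookkeeping ones: keeping the identifications $G\cong G\times\{1\}$ and $H\cong\{1\}\times H$ straight so that $R'_G$ really is the multiset $\{K\cap(G\times\{1\}) : K\in R'\}$ regarded inside $G$, checking that $G\times\{1\}$ and $\{1\}\times H$ are indeed normal in $G\times H$, and invoking the additivity of $\mu_{G\times H}$ over the partition $R = R'\uplus R''$ so that the two one-sided bounds can simply be summed.
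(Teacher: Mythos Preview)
Your proof is correct and follows essentially the same approach as the paper: both arguments establish the term-wise inequality $[G\times H:K] \ge [G:K\cap G]$ using the product formula for $KG$ (valid since $G \trianglelefteq G\times H$) and then sum over the partition $R = R' \uplus R''$. Your packaging of the key step as the general fact $[L:K] \ge [N:K\cap N]$ for $N \trianglelefteq L$ is slightly cleaner, but the underlying computation is the same.
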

\begin{proof}
For each $K \in R'$, we have $[G:K \cap G] = \frac{|G|}{|K \cap G|} \leq
\frac{|G|}{|K \cap G|} \frac{|G||H|}{|KG|} =
\frac{|G|}{|K \cap G|} \frac{|H| \frac{|K||G|}{|KG|}}{|K|} =
\frac{|G|}{|K \cap G|} \frac{|H||K \cap G|}{|K|} = \frac{|G||H|}{|K|} = [G \times H:K]$.
Similiarly, for each $K \in R''$, we have $[H:K \cap H] \leq [G \times H:K]$.
Finally,
$\mu_{G \times H}(R) = \sum_{K \in R}[G \times H:K] \geq
\sum_{K \in R'}[G:K \cap G] + 
\sum_{K \in R''}[H:K \cap H]=
\mu_{G}(R_G') + \mu_{H}(R_H'')$ as desired.
\end{proof}

\begin{lemma} \label{lemma_minimal_representation_weakly_partitions_implies_direct_product_formula}
Let $G$ and $H$ be finite groups.
Then if there is a minimal-degree faithful representation $R$ of $G \times H$ such that $(G, H)_R$
admits a weak faithful decomposition then $\mu(G \times H) = \mu(G) + \mu(H)$.
The other direction is easy and is discussed in the begnning of this section.
\end{lemma}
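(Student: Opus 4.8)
The plan is to combine the easy inequality $\mu(G \times H) \le \mu(G) + \mu(H)$, already recorded at the start of this section, with a reverse inequality that is essentially a repackaging of Lemma \ref{lemma_weak_partition_inequality}. So the only thing to produce is the bound $\mu(G \times H) \ge \mu(G) + \mu(H)$ under the stated hypothesis.

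First I would dispose of the degenerate case in which $G$ or $H$ is trivial: there the claim reduces to the convention $\mu(1) = 0$ and is immediate, so I may assume both $G$ and $H$ are nontrivial (which is also what Lemma \ref{lemma_weak_partition_inequality} requires). Next, let $R$ be the hypothesized minimal-degree faithful representation of $G \times H$, so that $\mu_{G \times H}(R) = \mu(G \times H)$, and let $R = R' \uplus R''$ be a weak faithful decomposition of $(G,H)_R$. By Definition \ref{definition_weakly_partitions_faithfully} the induced representation $R'_G$ is a \emph{faithful} representation of $G$ and $R''_H$ is a \emph{faithful} representation of $H$; hence, by the formula for $\mu$ in terms of faithful collections of subgroups, $\mu_G(R'_G) \ge \mu(G)$ and $\mu_H(R''_H) \ge \mu(H)$.

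Finally, I would apply Lemma \ref{lemma_weak_partition_inequality} to the decomposition $R = R' \uplus R''$, obtaining $\mu_{G \times H}(R) \ge \mu_G(R'_G) + \mu_H(R''_H)$. Chaining these inequalities gives $\mu(G \times H) = \mu_{G \times H}(R) \ge \mu_G(R'_G) + \mu_H(R''_H) \ge \mu(G) + \mu(H)$, which combined with the easy direction yields the claimed equality. I do not expect a genuine obstacle here: the substantive estimate is Lemma \ref{lemma_weak_partition_inequality}, and the remaining argument is bookkeeping. The only point meriting a moment's care is verifying that the hypotheses in play — namely that $R$ is faithful and that the decomposition is weak (so each $R^{(i)}_{G_i}$ is faithful) — are exactly what licenses passing from $\mu_G(R'_G)$ and $\mu_H(R''_H)$ down to $\mu(G)$ and $\mu(H)$, together with the trivial-group convention flagged above.
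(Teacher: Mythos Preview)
Your proposal is correct and follows essentially the same route as the paper: both establish $\mu(G\times H)\ge \mu(G)+\mu(H)$ by invoking Lemma~\ref{lemma_weak_partition_inequality} on the weak faithful decomposition $R=R'\uplus R''$ and then using faithfulness of $R'_G$ and $R''_H$ to pass down to $\mu(G)$ and $\mu(H)$. Your treatment of the trivial-group edge case is a reasonable extra precaution but not a departure in method.
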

\begin{proof}
Let $(G, H)_R$ admit a weak faithful decomposition as $R=R' \uplus R''$. We have
$\mu(G \times H) = \mu_{G \times H}(R) \overset{(a)}{\geq}
\mu_G(R_G') + \mu_H(R_H'') \geq \mu(G) + \mu(H)$
where inequality $(a)$ is due to Lemma \ref{lemma_weak_partition_inequality}.
\end{proof}

\begin{lemma} \label{lemma_minimal_representation_coprime_orders_partitions}
Let $G$ and $H$ be nontrivial finite groups such that $gcd(|G|, |H|)=1$
and let $R=\{K_1,\dotsc,K_n\}$ be a minimal-degree faithful representation of $G \times H$.
Then $(G, H)_R$ admits a faithful decomposition.
\end{lemma}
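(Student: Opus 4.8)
The plan is to exploit the coprimality twice: once to describe the subgroup lattice of $G\times H$, and once, crucially, to run a ``straightening'' exchange argument on the minimal representation $R$. First I would record the structural fact that, since $\gcd(|G|,|H|)=1$, every subgroup $K\leq G\times H$ is a box $K=\pi_G(K)\times\pi_H(K)$ (identifying $G$ with $G\times 1$ and $H$ with $1\times H$): a common section of $\pi_G(K)$ and $\pi_H(K)$ would, by Goursat's lemma, be a common quotient of a subgroup of $G$ and a subgroup of $H$, hence trivial; alternatively one checks it by an order count. Alongside this I would note the two elementary identities, valid for $A\leq G$ and $B\leq H$: $[G\times H:A\times B]=[G:A]\,[H:B]$ and $\mathrm{core}_{G\times H}(A\times B)=\mathrm{core}_G(A)\times\mathrm{core}_H(B)$. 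Writing each $K_i=A_i\times B_i$, these give that $R$ is faithful iff $\bigcap_i\mathrm{core}_G(A_i)=1$ and $\bigcap_i\mathrm{core}_H(B_i)=1$, and that $\mu_{G\times H}(R)=\sum_i[G:A_i]\,[H:B_i]$.

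Next I would call $K_i=A_i\times B_i$ \emph{mixed} if $A_i\neq G$ and $B_i\neq H$, and show a minimal $R$ has no mixed member. Suppose $K_i$ is mixed; replace it in $R$ by the pair $\{A_i\times H,\ G\times B_i\}$. Since $\mathrm{core}(A_i\times H)\cap\mathrm{core}(G\times B_i)=(\mathrm{core}_G(A_i)\times H)\cap(G\times\mathrm{core}_H(B_i))=\mathrm{core}_G(A_i)\times\mathrm{core}_H(B_i)=\mathrm{core}(K_i)$, the intersection of all the cores is unchanged, so the new multiset is still faithful. Its degree differs from that of $R$ only in that the summand $ab$, with $a=[G:A_i]$ and $b=[H:B_i]$, is replaced by $a+b$. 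This is where coprimality is essential: $a\mid|G|$ and $b\mid|H|$, so $\gcd(a,b)=1$, and since a mixed member forces $a,b\geq 2$, the integers $a,b$ are distinct, whence $(a-1)(b-1)\geq 2$ and $a+b<ab$ strictly. So the replacement strictly decreases the degree, contradicting minimality. The same style of argument (just delete it) shows $G\times H\notin R$ for a minimal $R$. Hence every $K_i$ is of the form $A_i\times H$ with $A_i\neq G$, or $G\times B_i$ with $B_i\neq H$, and exactly one of these holds.

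Finally I would assemble the decomposition. Let $R'=\{K_i:B_i=H\}$ and $R''=\{K_i:A_i=G\}$, so $R=R'\uplus R''$ by the previous paragraph. Since $A_i\times H=\pi_G^{-1}(A_i)$ and $G\times B_i=\pi_H^{-1}(B_i)$, what remains is that $\{A_i:K_i\in R'\}$ is a faithful representation of $G$ and $\{B_i:K_i\in R''\}$ a faithful one of $H$. But from the core identity, $1=\bigcap_i\mathrm{core}(K_i)=\bigl(\bigcap_{K_i\in R'}\mathrm{core}_G(A_i)\bigr)\times\bigl(\bigcap_{K_i\in R''}\mathrm{core}_H(B_i)\bigr)$, so both factors are trivial; this is exactly faithfulness of the two sub-representations, and it also forces $R',R''\neq\emptyset$ because $G$ and $H$ are nontrivial. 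Thus $(G,H)_R$ admits a faithful decomposition in the sense of Definition~\ref{definition_partitions_faithfully}. (Combined with the remark that a faithful decomposition is in particular a weak one and with Lemma~\ref{lemma_minimal_representation_weakly_partitions_implies_direct_product_formula}, this also recovers $\mu(G\times H)=\mu(G)+\mu(H)$ when $\gcd(|G|,|H|)=1$.)

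The one genuinely delicate point, and the step I would flag, is the strict inequality $a+b<ab$: it is exactly here that coprimality of the orders is indispensable, since without it one could have $a=b=2$ with $a+b=ab$, so a mixed $K_i$ could persist in a minimal representation and the argument would fail. Everything else is routine bookkeeping with indices and cores in a direct product whose subgroup lattice, thanks to coprimality, is simply the product of the two factor lattices.
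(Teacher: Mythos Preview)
Your proof is correct and follows essentially the same approach as the paper's: write each $K_i$ as a box $A_i\times B_i$ using coprimality, then use the exchange $K_i\rightsquigarrow\{A_i\times H,\ G\times B_i\}$ together with minimality to force every $K_i$ to be unmixed, and finally read off faithfulness of the two factor representations from the product formula for cores. The only cosmetic difference is the direction in which the arithmetic step is phrased: the paper deduces from minimality that $a+b\geq ab$ and then observes this forces $a=1$, $b=1$, or $a=b=2$ (the last excluded by coprimality), whereas you argue the contrapositive, that a mixed member with coprime $a,b\geq 2$ would give $a+b<ab$ strictly and hence a contradiction.
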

\begin{proof}
Since $gcd(|G|, |H|)=1$, by Lemma \ref{lemma_subgroups_of_direct_product_with_coprime_orders} in the appendix,  for any $1 \leq i \leq n$ we have $K_i = G_i \times H_i$ for some $G_i \leq G$, $H_i \leq H$.
Let $1 \leq i_0 \leq n$.
We have $K_{i_0} = G_{i_0} \times H_{i_0} = (G \times H_{i_0}) \cap  (G_{i_0} \times H)$.
Thus $R'=(R\setminus\{K_{i_0}\})\cup\{(G \times H_{i_0}), (G_{i_0} \times H)\}$ is a faithful representation of G (because R is).
Since R is of minimal degree, we have $0 \leq \mu_G(R')-\mu_G(R)=-[G \times H:G_{i_0} \times H_{i_0}] + [G \times H:G_{i_0} \times H] + [G \times H:G \times H_{i_0}] = [G:G_{i_0}][H:H_{i_0}]-[G:G_{i_0}]-[H:H_{i_0}]$.
That is, $[G:G_{i_0}]+[H:H_{i_0}] \geq [G:G_{i_0}][H:H_{i_0}]$. Thus either $[G:G_{i_0}]=1$ or $[H:H_{i_0}]=1$ or $[G:G_{i_0}]=[H:H_{i_0}]=2$.
But the latter is impossible since $gcd(|G|, |H|)=1$.
Thus either $K_{i_0} = G \times H_{i_0}$ or $K_{i_0} = G_{i_0} \times H$.
So $R=R' \uplus R''$ where $R'=\{G_1 \times H,\dotsc,G_r \times H\}$
and $R''=\{G \times H_1,\dotsc,G \times H_l\}$. We have
$1=
\cap_{i=1}^n core_{G \times H}K_i=
(\cap_{i=1}^r core_{G \times H}(G_i \times H)) \cap (\cap_{i=1}^l core_{G \times H}(G \times H_i))=
(\cap_{i=1}^r core_G(G_i) \times H) \cap (\cap_{i=1}^l G \times core_H(H_i))=
(core_G(\cap_{i=1}^r G) \times H) \cap (G \times core_H(\cap_{i=1}^l H_i))$.
Therefore, $core_G(\cap_{i=1}^r G_i) = core_H(\cap_{i=1}^l H_i) = 1$, so $\{G_1,\dotsc,G_r\}$ and $\{H_1,\dotsc,H_l\}$
are faithful representations of $G$ and $H$ respectively and so $(G,H)_R$ admits a faithful decomposition as claimed.
\end{proof}

\begin{theorem}[coprime additivity of $\mu$] \label{theorem_mu_for_direct_product_coprime_orders}
Let $G$ and $H$ be finite groups such that $gcd(|G|, |H|)=1$, then
$\mu(G \times H) = \mu(G) + \mu(H)$.
\end{theorem}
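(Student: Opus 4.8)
The plan is to combine the two preceding lemmas, which together already do essentially all the work. First I would dispose of the degenerate cases: if $G$ or $H$ is trivial the claimed identity is immediate, since the trivial group has a faithful representation of degree $0$ (the empty collection of subgroups), so e.g. $\mu(\{1\}\times H)=\mu(H)=\mu(\{1\})+\mu(H)$. From now on assume both $G$ and $H$ are nontrivial.

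Next, I would use finiteness to fix a minimal-degree faithful representation $R$ of $G\times H$; such an $R$ exists because Cayley's theorem guarantees that the collection of faithful representations of $G\times H$ is nonempty and their degrees form a nonempty set of positive integers, hence has a minimum. Applying Lemma \ref{lemma_minimal_representation_coprime_orders_partitions} to this $R$ — and this is the only place the hypothesis $\gcd(|G|,|H|)=1$ is used — we obtain that $(G,H)_R$ admits a faithful decomposition $R=R'\uplus R''$.

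Then I would observe, as already remarked right after Definition \ref{definition_weakly_partitions_faithfully}, that a faithful decomposition is in particular a weak faithful decomposition. Hence $R$ is a minimal-degree faithful representation of $G\times H$ for which $(G,H)_R$ admits a weak faithful decomposition, so Lemma \ref{lemma_minimal_representation_weakly_partitions_implies_direct_product_formula} immediately gives $\mu(G\times H)=\mu(G)+\mu(H)$, which is the desired conclusion.

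There is no substantive obstacle remaining at this stage: the real content sits in Lemma \ref{lemma_minimal_representation_coprime_orders_partitions} (whose proof relies on the structure of subgroups of a direct product of coprime-order groups, namely that each such subgroup splits as $G_i\times H_i$, together with a minimality/exchange argument) and in the weak decomposition inequality of Lemma \ref{lemma_weak_partition_inequality}. The only minor point requiring care is that those lemmas are stated for nontrivial $G$ and $H$, which is precisely why the trivial cases are peeled off at the outset.
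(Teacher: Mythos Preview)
Your proof is correct and follows essentially the same route as the paper: invoke Lemma~\ref{lemma_minimal_representation_coprime_orders_partitions} to get a faithful decomposition of a minimal-degree representation, note that this is in particular a weak faithful decomposition, and conclude via Lemma~\ref{lemma_minimal_representation_weakly_partitions_implies_direct_product_formula}. You are in fact slightly more careful than the paper in explicitly disposing of the trivial cases, since Lemma~\ref{lemma_minimal_representation_coprime_orders_partitions} is stated only for nontrivial $G$ and $H$.
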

\begin{proof}
By Lemma \ref{lemma_minimal_representation_coprime_orders_partitions} we have that any minimal-degree
faithful representation of $G \times H$ admits a faithful decomposition.
In particular, there is a faithful representation which admits a faithful decompositions and therefore
admits a weak faithful decomposition
and thus by Lemma \ref{lemma_minimal_representation_weakly_partitions_implies_direct_product_formula}
we have $\mu(G \times H) = \mu(G) + \mu(H)$.
\end{proof}

To conclude this section we prove another basic result that shows that any finite group has a minimal-degree
representation with a certain useful property.\\
Recall that in any lattice $L$, an element $x \in L$ is called {\it meet-irreducible} if for any two elements
$y,z \in L$, $x = y \wedge z$ implies $x=y$ or $x=z$.

The following result first appeared as Lemma 1 in \cite{johnson71}.
\begin{proposition}[existence of a minimal-degree representation by meet-irreducile subgroups] \label{proposition_minimal_representation_by_meet_irreducible_subgroups}
Let $G$ be a finite group. Then there is a minimal-degree faithful permutation representation of $G$, given by $\{G_1,\dotsc,G_n\}$,
such that for any $1 \leq i \leq n$, $G_i$ is meet-irreducible in the subgroup lattice of G.
\end{proposition}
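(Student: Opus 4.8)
The plan is to produce the required representation by an extremal-choice argument rather than by modifying an arbitrary minimal representation step by step. The minimal-degree faithful permutation representations of $G$ form a nonempty set (Cayley's theorem), and, viewed as multisets of subgroups, there are only finitely many of them, since each has degree exactly $\mu(G)$ and hence at most $\mu(G)$ parts. So I would pick among them one representation $R=\{G_1,\dots,G_n\}$ for which the quantity $\sum_{i=1}^n |G_i|$ is as large as possible, and claim that \emph{this} $R$ already has every $G_i$ meet-irreducible in the subgroup lattice $L(G)$.

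Suppose not; say $G_1$ is not meet-irreducible, so $G_1 = A \cap B$ for subgroups $A,B$ with $A,B \supsetneq G_1$. The key step is to compare $R$ with the modified multiset $R^{\ast} = (R \setminus \{G_1\}) \cup \{A,B\}$ (removing one copy of $G_1$; the $G_i$ need not be distinct). First I would check $R^{\ast}$ is still faithful, using the identity $\mathrm{core}_G(A) \cap \mathrm{core}_G(B) = \mathrm{core}_G(A\cap B)$ — the only nonobvious inclusion being that an element lying in every conjugate of $A$ and in every conjugate of $B$ lies in every conjugate of $A\cap B$ — so that $\bigcap_{L \in R^{\ast}} \mathrm{core}_G(L) = \mathrm{core}_G(G_1) \cap \bigcap_{i\ge 2}\mathrm{core}_G(G_i) = \bigcap_{i=1}^n \mathrm{core}_G(G_i) = 1$. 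Next I would compute the degree: $\mu_G(R^{\ast}) = \mu(G) - [G:G_1] + [G:A] + [G:B] = \mu(G) + [G:G_1]\bigl(\tfrac{1}{[A:G_1]}+\tfrac{1}{[B:G_1]}-1\bigr)$. Since $R^{\ast}$ is faithful, minimality of $\mu(G)$ forces $\tfrac{1}{[A:G_1]}+\tfrac{1}{[B:G_1]} \ge 1$, and as $[A:G_1],[B:G_1] \ge 2$ this is only possible when both indices equal $2$; in that case $\mu_G(R^{\ast}) = \mu(G)$ exactly, so $R^{\ast}$ is itself a minimal-degree faithful representation.

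Finally I would derive the contradiction by comparing order-sums: $\sum_{L \in R^{\ast}}|L| = \sum_{L \in R}|L| - |G_1| + |A| + |B| = \sum_{L \in R}|L| + (2+2-1)|G_1| > \sum_{L \in R}|L|$, contradicting the choice of $R$ as a maximizer. Hence every $G_i$ is meet-irreducible, which is the assertion.

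I do not expect a serious obstacle: the entire idea is the extremal choice together with the observation that "splitting" a reducible stabilizer $G_1 = A\cap B$ preserves faithfulness while the degree inequality pins the relevant indices down to $2$. The only points requiring a little care are the $\mathrm{core}$-of-intersection identity and the multiset bookkeeping (distinct-vs-repeated parts, removing a single copy of $G_1$); everything else is a one-line direct computation.
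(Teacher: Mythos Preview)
Your proof is correct and rests on the same key computation as the paper's: replacing a meet-reducible $G_1=A\cap B$ by the pair $\{A,B\}$ preserves faithfulness while the degree changes by $[G:G_1]\bigl(\tfrac{1}{[A:G_1]}+\tfrac{1}{[B:G_1]}-1\bigr)\le 0$. The only difference is packaging: the paper iterates this replacement and argues termination via finiteness of the subgroup lattice, whereas your extremal choice (maximize $\sum_i |G_i|$ among minimal-degree faithful representations) turns the same inequality into an immediate contradiction, neatly sidestepping the termination discussion.
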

\begin{proof}
Let $R=\{K_1,\dotsc,K_m\}$ be a minimal-degree faithful permutation representation of $G$. That is - $\mu_G(R) = \mu(G)$.
First we note that
\begin{equation} \label{eq_minimal_representation_antichain}
\text{for any }1 \leq i < j \leq m\text{, we have }K_i \not\subset K_j\text{ and  }K_j \not\subset K_i
\end{equation}
In particular R is a set (not a multiset).\\
We will iteratively alter $R$ until all of the subgroups in it are meet-irreducible.
On one hand we will prove that each iteration keeps $R$ faithful of minimal degree.
On the other hand we will show that this iterative process terminates after some finite number of steps.
Together these 2 claims prove the existence of a minimal-degree faithful representation with the desired property.

We now describe the iterative process. As long as there is a meet-reducible subgroup of $G$ in $R$ we do the following: Let $K \in R$ be such a meet-reducible group.
So there are subgroups $M$ and $L$ of $G$ such that $K$ is a proper subgroup of both $M$ and $L$, but $K = M \cap L$.
Therefore $R'=(R \setminus \{K\}) \cup \{M, L\}$ is a faithful representation of $G$ with $\mu_G(R') = \mu(G) - [G:K] + [G:M] + [G:L] = 
\mu(G) + [G:K] (-1 + \frac{1}{[M:K]} + \frac{1}{[L:K]}) \leq \mu(G) + [G:K](-1 + \frac{1}{2} + \frac{1}{2}) = \mu(G)$.
Thus $R'$ is still a minimum-degree faithful representation.

It remains to show that this process eventually terminates. By property \eqref{eq_minimal_representation_antichain}
we know that  it is not possible to get the same representation in 2 different iterations.
But G is finite, and thus so is its subgroup lattice and therefore so is the number of subsets of its subgroup lattice and therefore the process does eventually terminate.
\end{proof}

It should be noted that the above proof shows that for a group $G$ of odd order, any minimal-degree
faithful representation is given by a collection of meet-irreducible subgroups.
We will not use that fact.

\section{The value of $\mu(G)$ for an abelian group $G$} \label{section_abelian_groups}
In this section we show how to compute the value of $\mu(G)$ for a finite abelian group $G$.
To describe the formula, we first need to recall that any finite abelian group is isomorphic to the direct
product of cyclic groups, each of prime-power order.
That is, if $G$ is a nontrivial finite abelian group then $G \cong \prod_{i=1}^n \mathbb{Z}_{p_i^{e_i}}$
for some integer $n \geq 1$, primes $p_1,\dotsc,p_n$ and integers $e_1,\dotsc,e_n \geq 1$.
This decomposition of $G$ is called the \emph{primary decomposition of $G$}.
Further, the primary decomposition of $G$ is unique up to the order of the factors.
This allows us to give a formula for $\mu(G)$ in terms of the numbers
$n$, $p_1,\dotsc,p_n$ and $e_1,\dotsc,e_n$.
We can now state the result of this section:
For an arbitrary finite abelian group $G$, isomorphic to $\prod_{i=1}^n \mathbb{Z}_{p_i^{e_i}}$, as above,
we have $\mu(G) = \sum_{i=1}^n p_i^{e_i}$.
This is the content of Theorem \ref{theorem_mu_for_abelian_groups}.
This result was first proved Theorem 2 of \cite{johnson71} by induction on the number of factors in the primary
decomposition of $G$.
We give a new, different, proof.

We begin with some notation:
\begin{definition}[the function $m$] \label{definition_m_function}
Let G be a finite abelian group. Let the unique primary decomposition of G be $G \cong \prod_{i=1}^n \mathbb{Z}_{p_i^{e_i}}$ for some $n \geq 1$, primes $p_1,\dotsc,p_n$ and integers $e_1,\dotsc,e_n \geq 1$.
Then we define m(G) := $\sum_{i=1}^n p_i^{e_i}$
\end{definition}
\begin{lemma}[properties of $m$] \label{lemma_m_function}
Let $K$ and $L$ be finite abelian groups and let $K=\prod_{i=1}^n \mathbb{Z}_{p_i^{d_i}}$, $L=\prod_{i=1}^m \mathbb{Z}_{q_i^{e_i}}$ be their primary decompositions, then:
\begin{itemize}
	\item (cardinality bound) $m(K) \leq |K|$.
	\item (additivity) $m(K \times L) = m(K) + m(L)$.
	\item (monotonicity) If $H \leq K$ then $m(H) \leq m(K)$.
\end{itemize}
\end{lemma}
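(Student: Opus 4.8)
The plan is to dispatch the two easy bullets quickly and concentrate on monotonicity, which is the only one with real content. For the cardinality bound I would write $m(K)=\sum_{i=1}^n p_i^{d_i}$ and $|K|=\prod_{i=1}^n p_i^{d_i}$ and note that each factor $a_i:=p_i^{d_i}$ is $\geq 2$; it then suffices to prove the elementary inequality $\sum_{i} a_i\leq\prod_{i} a_i$ whenever all $a_i\geq 2$, which is a one-line induction on $n$ (the inductive step reduces to $(a_{n+1}-1)\sum_{i\leq n}a_i\geq a_{n+1}$, and the left side is $\geq 2(a_{n+1}-1)\geq a_{n+1}$; the trivial group is the degenerate case $n=0$, where $m=0\leq 1$). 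For additivity, the whole point is that the primary decomposition of $K\times L$ is the concatenation of those of $K$ and $L$ — this is exactly the uniqueness clause of the structure theorem — so the defining sum for $m(K\times L)$ splits as $m(K)+m(L)$. Iterating additivity over the Sylow decomposition $K\cong\prod_p K_p$ gives $m(K)=\sum_p m(K_p)$, a fact I will use in the next step.

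For monotonicity I would first reduce to $p$-groups: if $H\leq K$ then $H_p=H\cap K_p\leq K_p$ for every prime $p$, and since $m(H)=\sum_p m(H_p)$ and $m(K)=\sum_p m(K_p)$ it suffices to prove $m(H_p)\leq m(K_p)$. So assume $K\cong\prod_{i=1}^n\mathbb{Z}_{p^{d_i}}$ and $H\cong\prod_{j=1}^s\mathbb{Z}_{p^{e_j}}$ with $H\leq K$, and set $c_k(H):=\#\{j:e_j\geq k\}$ and $c_k(K):=\#\{i:d_i\geq k\}$ (the lengths of the conjugate partitions). The heart of the argument is the claim that $c_k(H)\leq c_k(K)$ for every $k\geq 1$. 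To obtain this I would note $|H[p^k]|=p^{\sum_j\min(e_j,k)}$ directly from the decomposition of $H$, so $c_k(H)=\log_p\bigl(|H[p^k]|/|H[p^{k-1}]|\bigr)$; since $H[p^{k-1}]=H[p^k]\cap K[p^{k-1}]$ (an easy check), the second isomorphism theorem embeds $H[p^k]/H[p^{k-1}]$ into $K[p^k]/K[p^{k-1}]$, and comparing orders yields $c_k(H)\leq c_k(K)$.

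Finally I would convert this into the desired inequality using the identity $p^{e}=1+(p-1)\sum_{l=1}^{e}p^{l-1}$, which gives
\[
m(H)=\sum_{j=1}^s p^{e_j}=c_1(H)+(p-1)\sum_{l\geq 1}p^{l-1}c_l(H).
\]
Every coefficient appearing on the right is nonnegative (the coefficient of $c_l$ is $(p-1)p^{l-1}$ for $l\geq 2$ and $p$ for $l=1$), so the termwise bounds $c_l(H)\leq c_l(K)$ immediately give $m(H)\leq c_1(K)+(p-1)\sum_{l\geq 1}p^{l-1}c_l(K)=m(K)$. The main obstacle is precisely this monotonicity step, and within it the crux is upgrading the obvious containment $H[p^k]\subseteq K[p^k]$ — which by itself only bounds $|H[p^k]|\leq|K[p^k]|$ — to control of the successive quotients, i.e. of the individual multiplicities $c_k$; this refinement is exactly what is needed, since $m$ is a positive linear combination of those multiplicities.
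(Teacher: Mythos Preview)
Your proof is correct and follows essentially the same route as the paper: the heart of the monotonicity argument---embedding $H[p^k]/H[p^{k-1}]$ into $K[p^k]/K[p^{k-1}]$ via the second isomorphism theorem to get $c_k(H)\leq c_k(K)$---is exactly the paper's appendix lemma (your $c_k$ is the paper's function $g(\cdot,k)$). The only cosmetic differences are that you make the reduction to $p$-groups explicit and finish by writing $m$ as a positive linear combination of the $c_k$, whereas the paper repackages the inequalities $c_k(H)\leq c_k(K)$ as a termwise domination $a_i\leq d_i$ of the sorted exponents before summing.
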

\begin{proof}
\begin{itemize}
	\item $m(K) = \sum_{i=1}^n p_i^{d_i} \leq \prod_{i=1}^n p_i^{d_i} \leq |K| $.
	\item $m(K \times L) =  \sum_{i=1}^n p_i^{d_i} +  \sum_{i=1}^m q_i^{e_i} = m(K) + m(L)$
	\item
Lemma \ref{lemma_subgroups_of_abelian_group} in the appendix
states that if $H$ is a subgroup of the finite abelian group $K=\prod_{i=1}^n \mathbb{Z}_{p_i^{d_i}}$ then
$H=\prod_{i=1}^n \mathbb{Z}_{p_i^{a_i}}$ for some integers $a_1,\dotsc,a_n$
such that $0 \leq a_i \leq d_i$ for each $1 \leq i \leq n$.
Thus, $m(H) = m(\prod_{i=1}^n \mathbb{Z}_{p_i^{a_i}}) =
\sum_{\substack{1 \leq i \leq n\\a_i \neq 0}} p_i^{a_i} \leq
\sum_{1 \leq i \leq n} p_i^{a_i} \leq
\sum_{1 \leq i \leq n} p_i^{d_i} = m(K)$.
\end{itemize}
\end{proof}

\begin{lemma}[minimal degree of an abelian $p$-group]\label{lemma_mu_of_abelian_p_groups}
Let G be a finite abelian p-group. Then $\mu(G) = m(G)$.
\end{lemma}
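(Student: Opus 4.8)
The plan is to prove the two inequalities $\mu(G)\le m(G)$ and $\mu(G)\ge m(G)$ separately. Both fall out quickly from the three properties of $m$ recorded in Lemma~\ref{lemma_m_function}, once we exploit the fact that, for an \emph{abelian} group, faithfulness of a permutation representation is a particularly simple condition: since every subgroup is normal, $core_G(H)=H$ for each $H$, so a multiset $R=\{H_1,\dots,H_m\}$ is faithful precisely when $\bigcap_{i=1}^m H_i=1$.

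For the upper bound I would use the primary decomposition $G\cong\prod_{j=1}^n\mathbb{Z}_{p^{e_j}}$ and, for each $j$, take $H_j\le G$ to be the subgroup obtained by deleting the $j$-th coordinate, so that $G/H_j\cong\mathbb{Z}_{p^{e_j}}$ and hence $[G:H_j]=p^{e_j}$. Since $\bigcap_{j=1}^n H_j=1$, the multiset $R=\{H_1,\dots,H_n\}$ is a faithful representation of $G$, whence $\mu(G)\le\mu_G(R)=\sum_{j=1}^n p^{e_j}=m(G)$.

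For the lower bound, let $R=\{H_1,\dots,H_m\}$ be a minimal-degree faithful representation of $G$. By the remark above, $\bigcap_{i=1}^m H_i=1$, so the map $g\mapsto(gH_1,\dots,gH_m)$ embeds $G$ into $K:=\prod_{i=1}^m G/H_i$. Each $G/H_i$ is an abelian $p$-group, hence so is $K$; and since $m$ is an isomorphism invariant, monotonicity of $m$ gives $m(G)\le m(K)$, additivity gives $m(K)=\sum_{i=1}^m m(G/H_i)$, and the cardinality bound gives $m(G/H_i)\le|G/H_i|=[G:H_i]$. Chaining these, $m(G)\le\sum_{i=1}^m[G:H_i]=\mu_G(R)=\mu(G)$, and together with the previous paragraph this yields $\mu(G)=m(G)$.

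I do not anticipate a real obstacle here: the only step needing a moment's care is recognizing that, for abelian $G$, the statement ``$R$ is faithful'' unwinds to ``$\bigcap_{H\in R}H=1$'', which is exactly what lets us embed $G$ into the product $\prod_{H\in R}G/H$; after that the argument is just the three bullets of Lemma~\ref{lemma_m_function} applied in turn, and notably it does not require Proposition~\ref{proposition_minimal_representation_by_meet_irreducible_subgroups}.
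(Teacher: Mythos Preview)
Your proof is correct and is essentially the same as the paper's: both directions use the identical constructions (the coordinate-deleting subgroups for $\mu(G)\le m(G)$, and the diagonal embedding $G\hookrightarrow\prod_i G/H_i$ together with the three properties of $m$ from Lemma~\ref{lemma_m_function} for $\mu(G)\ge m(G)$). The only cosmetic difference is that the paper bounds an arbitrary faithful representation rather than a minimal-degree one, but this changes nothing.
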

\begin{proof}
Let the primary decomposition of G be $G\cong\prod_{i=1}^n \mathbb{Z}_{p_i^{d_i}}$.
We prove both {$\mu(G) \leq m(G)$} and {$\mu(G) \geq m(G)$} to conclude the desired equality:
\begin{itemize}
\item
{$\mu(G) \leq m(G)$}:
We need to construct a faithful permutation representation of $G$ of degree $m(G)$.
For any $1 \leq j \leq n$ define $H_i = \prod_{i=1}^{j-1}\mathbb{Z}_{p_i^{d_i}} \times 1 \times \prod_{i=j+1}^n\mathbb{Z}_{p_i^{d_i}}$.
Then the representation $R=\{H_1,\dotsc,H_n\}$ of $G$ is faithful because $\cap_{i=1}^{n}K_i = 1$ and its degree is
$d_G(R) = \sum_{i=1}^{n}[G:H_i] = \sum_{i=1}^n p_i^{d_i}=m(G)$, as required.

\item
{$\mu(G) \geq m(G)$}: We need to take an arbitrary faithful representation of $G$ and prove that its degree is no less than $m(G)$. Let $\{H_1,\dotsc,H_m\}$ be a faithful representation of $G$. It is sufficient to justify the following chain of equalities and inequalities:\\
$\mu_G(\{H_1,\dotsc,H_m\}) = \sum_{i=1}^m [G:H_i] = \sum_{i=1}^m |G/H_i| \overset{(a)}{\geq} \sum_{i=1}^m m(G/H_i) \overset{(b)}{=} m(\prod_{i=1}^m (G/H_i)) \overset{(c)}{\geq} m(G)$.\\
Steps $(a)$, $(b)$ and $(c)$ are due to the properties of the function $m$ stated in Lemma \ref{lemma_m_function}:
Inequality $(a)$ follows from the cardinality bound of $m$.
Equality $(b)$ follows from the additivity of $m$.
In order to show that inequality $(c)$ follows from the monotonicity of $m$ we need to show that
G embeds in $\prod_{i=1}^m (G/H_i)$ which we do as follows:\\
The function $\phi:G \rightarrow \prod_{i=1}^m (G/H_i)$ defined by $\phi(g) = (gH_1,\dotsc,gH_m)$ is a homomorphism. We have $ker(\phi) = \cap_{i=1}^m H_i \overset{(d)}{=} \cap_{i=1}^m core_G(H_i) \overset{(e)}{=} 1$.
Equality $(d)$ follows because $G$ is abelian and equality $(e)$ follows because the representation $\{H_1,\dotsc,H_m\}$ is faithful.  Thus $\phi$ is an embedding. Thus G embeds in $\prod_{i=1}^m (G/H_i)$ as desired. This completes the proof.
\end{itemize}
\end{proof}

\begin{theorem}[minimal degree of an abelian group] \label{theorem_mu_for_abelian_groups}
Let G be a finite abelian group. Then $\mu(G) = m(G)$.
\end{theorem}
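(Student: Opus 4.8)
The plan is to reduce to the $p$-group case already handled in Lemma~\ref{lemma_mu_of_abelian_p_groups} by splitting $G$ into its primary components and invoking coprime additivity of $\mu$ (Theorem~\ref{theorem_mu_for_direct_product_coprime_orders}). Assume $G$ is nontrivial (the trivial group is outside the scope of $m$, which is defined only on nontrivial groups). Write the primary decomposition $G \cong \prod_{i=1}^n \mathbb{Z}_{p_i^{e_i}}$, let $q_1,\dotsc,q_k$ be the distinct primes occurring among $p_1,\dotsc,p_n$, and for each $j$ let $G_j$ denote the product of those cyclic factors $\mathbb{Z}_{p_i^{e_i}}$ with $p_i = q_j$; this is the Sylow $q_j$-subgroup of $G$, an abelian $q_j$-group. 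Then $G \cong \prod_{j=1}^k G_j$ and the orders $|G_1|,\dotsc,|G_k|$ are pairwise coprime.

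First I would show $\mu(G) = \sum_{j=1}^k \mu(G_j)$ by induction on $k$. The base case $k=1$ is trivial. For the inductive step, note that $\gcd\bigl(|G_1 \times \dotsm \times G_{k-1}|,\, |G_k|\bigr) = 1$, so Theorem~\ref{theorem_mu_for_direct_product_coprime_orders} gives $\mu(G) = \mu(G_1 \times \dotsm \times G_{k-1}) + \mu(G_k)$, and applying the induction hypothesis to $G_1 \times \dotsm \times G_{k-1}$ finishes this step. One small point: Theorem~\ref{theorem_mu_for_direct_product_coprime_orders} is stated for nontrivial groups, but a trivial $G_j$ simply does not appear in the primary decomposition, so every $G_j$ above is nontrivial and the hypothesis is met.

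Next, Lemma~\ref{lemma_mu_of_abelian_p_groups} applied to each abelian $q_j$-group $G_j$ gives $\mu(G_j) = m(G_j)$. Finally, iterating the additivity of $m$ from Lemma~\ref{lemma_m_function} yields $\sum_{j=1}^k m(G_j) = m\bigl(\textstyle\prod_{j=1}^k G_j\bigr) = m(G)$. Chaining these three facts gives $\mu(G) = \sum_{j=1}^k \mu(G_j) = \sum_{j=1}^k m(G_j) = m(G)$, as claimed.

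There is essentially no hard step here: all the real content lives in Lemma~\ref{lemma_mu_of_abelian_p_groups} and Theorem~\ref{theorem_mu_for_direct_product_coprime_orders}. The only thing to be careful about is the bookkeeping of the induction and making sure the edge cases are not swept under the rug --- in particular the case of prime-power order, where $k=1$ and the statement is literally Lemma~\ref{lemma_mu_of_abelian_p_groups}, and the trivial group, which is excluded.
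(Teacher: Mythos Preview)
Your proof is correct and follows essentially the same route as the paper: decompose $G$ into its Sylow (primary) components, apply coprime additivity of $\mu$ across these components, invoke Lemma~\ref{lemma_mu_of_abelian_p_groups} on each $p$-part, and finish with the additivity of $m$. The only difference is cosmetic: you spell out the induction on the number of distinct primes needed to iterate Theorem~\ref{theorem_mu_for_direct_product_coprime_orders}, whereas the paper applies it to all factors at once in a single displayed chain of equalities.
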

\begin{proof}
The group $G$ is a direct product of abelian $p$-groups $G=\prod_{i=1}^n G_i$
where $G_i$ is a $p_i$-group for some distinct primes $p_1,\dotsc,p_n$.
We now have
$\mu(G) = \mu(\prod_{i=1}^n G_i) \overset{(1)}{=} \prod_{i=1}^n \mu(G_i) \overset{(2)}{=}
\prod_{i=1}^n m(G_i) \overset{(3)}{=} m(\prod_{i=1}^n G_i) = m(G)$,
where equality $(1)$ follows from the coprime addivity of $\mu$ proved in Lemma \ref{theorem_mu_for_direct_product_coprime_orders},
equality $(2)$ follows from the equality between $\mu$ and $m$ for abelian $p$-groups proved in Lemma \ref{lemma_mu_of_abelian_p_groups} above and
equality $(3)$ follows from the addivity of the function $m$ stated in the second part of Lemma \ref{lemma_m_function}.
\end{proof}

Note that if $G$ and $H$ are finite abelian groups, then by Theorem \ref{theorem_mu_for_abelian_groups},
we have $\mu(G \times H) = \mu(G) + \mu(H)$. A larger collection of groups for which this formula holds is the subject
of the next two sections.

\section{Additivity of $\mu$ for central socle groups} \label{section_additivity_for_central_socle}
This section generalizes a result first proved in \cite{wright75}.
Some of the ideas presented here are based on ideas which first appeared in \cite{wright75}.

Recall that the socle of a group $G$, denoted $Soc(G)$,
is the subgroup generated by all minimal normal subgroups of $G$.
The socle of a finite group is always a direct product of simple groups and thus,
if $G$ is a finite group and $Soc(G)$ is abelian, then $Soc(G)$ is the direct product of elementary abelian groups.

\begin{definition}[Central socle groups]
The collection $CS$ is defined as the collection of all nontrivial finite groups for which the socle is central.
That is, $CS := \{G \mid G \text{ is a nontrivial finite group and } Soc(G) \leq Z(G)\}$.
\end{definition}

For further discussion of the socle see Subsecion \ref{subsection_group_theory}.
In particular, in Lemma \ref{lemma_socle_of_direct_product} we show that $Soc(G \times H) = Soc(H) \times Soc(H)$ and
thus $CS$ is closed under taking direct products.

The purpose of this section is to prove the formula $\mu(G \times H) = \mu(G) + \mu(H)$
for any two groups $G$ and $H$ which belong to $CS$.
This is a generalization of the same formula for nilpotent groups given in \cite{wright75}
($CS$ strictly contains the collection of nilpotent groups).
It should be noted that there are pairs of groups $G$ and $H$ such that $\mu(G \times H) < \mu(G) + \mu(H)$.
For examples, see \cite{wright75} or \cite{saunders10}.
It should be also noted that in \cite{wright75}, after proving the formula for nilpotent groups, the same formula
is proved for an extended collection of groups, each containing a "large enough" nilpotent subgroup.
In the next section we employ the same extension mechanism,
thus proving the formula for groups which contain a "large enough" subgroup that belongs to $CS$.

We first outline the proof given in this section.
Consider two groups $G$ and $H$ which belong to $CS$.
By Lemma \ref{lemma_minimal_representation_weakly_partitions_implies_direct_product_formula},
it is enough to construct a minimal-degree faithful representation, $R$, of $G \times H$ such that $(G, H)_R$
admits a weak faithful decomposition.
We will show that if $R$ is a minimal-degree faithful representation of $G \times H$ given by a collection of
meet-irreducile subgroups of $G \times H$ then $(G, H)_R$ admits a weak-faithful decomposition.
As we have already seen in Lemma \ref{proposition_minimal_representation_by_meet_irreducible_subgroups},
such a representation exists.

We thus let $R$ be a minimal-degree faithful representation of $G \times H$ given
by meet-irreducible subgroups and proceed to show
that $(G, H)_R$ admits a weak faithful representation.
To do so, we consider the representation induced by $R$ on $Soc(G \times H)=Soc(G) \times Soc(H)$,
which we denote by $R_{Soc(G \times H)}$ as in Definition \ref{definition_induced_representation}.
We then prove that $R_{Soc(G \times H)}$ is faithful and that
$(Soc(G), Soc(H))_{R_{Soc(G \times H)}}$ admits a weak faithful decomposition as
$R_{Soc(G \times H)} = R_{Soc(G \times H)}' \uplus R_{Soc(G \times H)}''$.
Finally, we show that if $R'$ is the set of subgroups in $R$ which induce $R_{Soc(G \times H)}'$
and $R''$ is the set of subgroups in $R$ which induce $R_{Soc(G \times H)}''$ then $R=R' \uplus R''$
is a weak faithful decomposition of $(G, H)_R$, as desired.

Before executing the plan described above, we compare it to the proof given in \cite{wright75}.
In order to compare the two proofs we must describe the method of \cite{wright75} using the
terminology preseneted in Section \ref{section_basics}.
Both proofs start with a minimal-degree representation of $G \times H$ given by meet-irreducible subgroups.
In \cite{wright75} it is assumed that $G \times H$ is nilpotent and thus $R$ decomposes as faithful representations
of $p$-groups whose direct product is $G \times H$.
The proof in \cite{wright75} then proceeds in a method similar to the one used in our paper
to show that each of these representations decomposes to faithful representations
of a factor coming from $G$ and a factor coming from $H$,
using the fact that the socle of a $p$-group is a vector space.
Our paper refines this ideas by only requiring the socle to be central and immediately considering
the rerpesentation induced on $Soc(G \times H)$ (which is a direct product of vector spaces
when it is central). We then show that the representation induced on the
$Soc(G \times H)$ decomposes to faithful representations of $Soc(G)$ and $Soc(H)$ and show that
when we go back up to $G$ and $H$ we get faithful representations of $G$ and $H$.
To summarize the comparison, decomposing the representation entirely down at the socle, instead of first decomposing
to $p$-groups and then decomposing at the socle of each of them, is what allows us to generalize
the result proved in \cite{wright75}.

\begin{lemma}[properties of the induced representation on the socle] \label{lemma_induced_representation_of_central_socle_group_on_socle}
Let $G$ be a group belonging to $CS$ and let $R=\{G_1,\dotsc,G_n\}$ be a minimal-degree faithful representation of $G$.
Let $p_1,\dotsc,p_m$ be the set of primes dividing $|Z(G)|$.
Then the induced representation of $R$ on $Soc(G) = \prod_{i=1}^mZ(G)[p_m]$
has the following properties:
\begin{enumerate}
\item $R_{Soc(G)}$ is faithful and $(Z(G)[p_1],\dotsc,Z(G)[p_m])_{R_{Soc(G)}}$
decomposes faithfully as $R=\uplus_{i=1}^m R_i$ (denote $R_j = \{G_1^{(j)},\dotsc,G_{n_j}^{(j)}\}$).
\item For each $1 \leq j \leq m$, $R_j$ has no redundant transitive constituents. That is, for any $1 \leq i_0 \leq n_j$ we have $\cap_{i \neq i_0} (G_i^{(j)} \cap Z(G)[p_j]) \neq 1$
\suspend{enumerate}
Further, if $G_i$ is meet-irreducible in the subgroup lattice of $G$ for each $1 \leq i \leq n$, then:
\resume{enumerate}
\item For each $1 \leq j \leq m$ and $1 \leq i \leq n_j$, we have $dim(G_i^{(j)} \cap Z(G)[p_j]) = dim(Z(G)[p_j]) - 1$.
\end{enumerate}
\end{lemma}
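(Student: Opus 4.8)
The plan is to push everything down into the socle and work one coordinate at a time there. Since $Soc(G)\le Z(G)$, every minimal normal subgroup of $G$ is central of prime order, so $Soc(G)=\prod_{j=1}^{m}V_j$ with $V_j:=Z(G)[p_j]$, and each $V_j$ is elementary abelian, hence a vector space over $\mathbb{Z}_{p_j}$. Because the $p_j$ are distinct primes, the structure of subgroups of a finite abelian group (Lemma \ref{lemma_subgroups_of_direct_product_with_coprime_orders}) gives, for each constituent $G_i$ of $R$, a decomposition $G_i\cap Soc(G)=\prod_{j=1}^{m}W_j^{(i)}$, where $W_j^{(i)}:=G_i\cap V_j\le V_j$. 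This coordinatization is the backbone of the argument, and all three parts are proved by the ``enlarge or delete a constituent, then contradict optimality'' pattern.

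First I would prove part 1. Faithfulness of $R_{Soc(G)}$ needs only that $Soc(G)$ is central: the subgroup $N:=\bigl(\bigcap_i G_i\bigr)\cap Soc(G)$ lies in $Z(G)$, hence is normal in $G$ and contained in every $G_i$, so $N\le\bigcap_i core_G(G_i)=1$; since $Soc(G)$ is abelian this is exactly faithfulness of $R_{Soc(G)}$, and coordinate-wise it yields $\bigcap_i W_j^{(i)}=1$ for all $j$. The substantive claim is that each $G_i$ has \emph{exactly one} index $j$ with $W_j^{(i)}\subsetneq V_j$. For ``at most one'': if $W_{j_1}^{(i)}\subsetneq V_{j_1}$ and $W_{j_2}^{(i)}\subsetneq V_{j_2}$ with $j_1\ne j_2$, set $M=G_iV_{j_1}$ and $L=G_iV_{j_2}$; a short computation with the Dedekind law gives $M\cap L=G_i$ while $[M:G_i]=[V_{j_1}:W_{j_1}^{(i)}]\ge p_{j_1}$ and $[L:G_i]\ge p_{j_2}$, so (exactly as in the proof of Proposition \ref{proposition_minimal_representation_by_meet_irreducible_subgroups}) replacing $G_i$ by $\{M,L\}$ keeps $R$ faithful and changes its degree by $[G:G_i]\bigl(-1+\tfrac{1}{[M:G_i]}+\tfrac{1}{[L:G_i]}\bigr)\le[G:G_i]\bigl(-1+\tfrac12+\tfrac13\bigr)<0$, contradicting minimality; here $p_{j_1}\ne p_{j_2}$ is precisely what forces the bracket to be strictly negative. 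For ``at least one'' (i.e.\ $G_i\not\supseteq Soc(G)$): if $Soc(G)\subseteq G_{i_0}$ then the $i_0$-term contributes nothing to any intersection, so $\bigl(\bigcap_{i\ne i_0}G_i\bigr)\cap Soc(G)=1$; since a nontrivial normal subgroup of $G$ must meet $Soc(G)$ nontrivially, $\bigcap_{i\ne i_0}core_G(G_i)=1$, so $R\setminus\{G_{i_0}\}$ is a faithful representation of smaller degree, again contradicting minimality. Assigning each $G_i$ to $R_j$ for its unique deficient coordinate then gives $R_{Soc(G)}=\uplus_{j=1}^m R_j$ with $R_j=\{\pi_j^{-1}(W_j^{(i)})\}$; the terms omitted from $\bigcap_{i\in R_j}W_j^{(i)}$ all equal $V_j$, so this intersection equals $\bigcap_{\text{all }i}W_j^{(i)}=1$, i.e.\ $R_j$ descends to a faithful representation of $V_j$. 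That is the faithful decomposition of part 1.

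Part 2 is the same deletion move: if $i_0\in R_j$ were redundant, i.e.\ $\bigcap_{i\in R_j,\,i\ne i_0}W_j^{(i)}=1$, then $\bigcap_{i\ne i_0}W_k^{(i)}=1$ for every $k$ (for $k=j$ by hypothesis, for $k\ne j$ because the $i_0$-term is all of $V_k$), hence $\bigl(\bigcap_{i\ne i_0}G_i\bigr)\cap Soc(G)=1$, so as above $R\setminus\{G_{i_0}\}$ is faithful of smaller degree---contradiction. For part 3, assume in addition that every $G_i$ is meet-irreducible and fix $i\in R_j$, so $W:=W_j^{(i)}=G_i\cap V_j$ is a proper subspace of $V_j$. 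If its codimension were $d\ge 2$, pick a complement $V_j=W\oplus C$ and split $C=C_1\oplus C_2$ with $C_1,C_2\ne 0$, and set $M=G_iC_1$, $L=G_iC_2$. Both strictly contain $G_i$ (since $C_t\cap G_i=C_t\cap W=0$), yet $M\cap L=G_i$: from $g_1c_1=g_2c_2$ one gets $c_1c_2^{-1}\in G_i\cap V_j=W$ and simultaneously $c_1c_2^{-1}\in C$, forcing $c_1=c_2=0$. This contradicts meet-irreducibility of $G_i$, so the codimension of $G_i\cap Z(G)[p_j]$ in $Z(G)[p_j]$ is exactly $1$, as claimed.

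The routine parts are the Dedekind-law manipulations and the degree arithmetic. The one genuinely load-bearing step---where I would be most careful---is ``exactly one deficient coordinate'' in part 1: both that the replacement subgroups $M,L$, built from the Sylow pieces $V_{j_1},V_{j_2}$ of $Soc(G)$, really intersect back in $G_i$ (so that minimality applies), and that the inequality $-1+1/[M:G_i]+1/[L:G_i]<0$ essentially uses $p_{j_1}\ne p_{j_2}$ (for equal primes one would only get $\le 0$, which does not contradict minimality). Once this and the deletion step are in place, the faithful decomposition, the absence of redundant constituents, and the hyperplane-dimension count all fall out by repeating the pattern in each coordinate and, for part 3, invoking meet-irreducibility in place of minimality.
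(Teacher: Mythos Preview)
Your argument is correct, and for parts~(2) and~(3) it is essentially the paper's proof: the deletion move in~(2) and the ``two overgroups through $G_i$ built from pieces of $Z(G)[p_j]$'' contradiction to meet-irreducibility in~(3) match the paper almost line for line (the paper picks single coset representatives $z_1,z_2$ where you pick complementary summands $C_1,C_2$, which is the same thing).

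The one substantive difference is in part~(1), the faithful decomposition. The paper, after proving faithfulness of $R_{Soc(G)}$, simply invokes Lemma~\ref{lemma_minimal_representation_coprime_orders_partitions} to split $R_{Soc(G)}$ over the coprime factors $Z(G)[p_j]$. That lemma, however, is stated for \emph{minimal-degree} faithful representations of the direct product, and the paper never checks that $R_{Soc(G)}$ is minimal-degree on $Soc(G)$ (indeed, a priori there is no reason it should be). You avoid this by running the index argument one level up, on $G$ itself: if $G_i$ were deficient in two Sylow pieces $V_{j_1},V_{j_2}$ of $Soc(G)$, you replace $G_i$ by $G_iV_{j_1}$ and $G_iV_{j_2}$, use Dedekind and coprimality to see these meet in $G_i$, and use $p_{j_1}\neq p_{j_2}$ to get a \emph{strict} drop in the degree of $R$ as a representation of $G$---so it is the minimality of $R$ on $G$, not of $R_{Soc(G)}$ on $Soc(G)$, that does the work. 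This is cleaner and closes the small gap in the paper's citation; it also makes the ``codimension~$0$'' case of~(3) automatic (your $i\in R_j$ already forces $W_j^{(i)}\subsetneq V_j$), whereas the paper disposes of that case separately via parts~(1) and~(2).
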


\begin{proof}
\begin{enumerate}
\item
We first show that $R_{Soc(G)}$ is a faithful representation.
Assume, for the sake of contradiction, that $\cap_{i=1}^n (G_i \cap Soc(G)) \neq 1$.
We have $\cap_{i=1}^n (G_i \cap Soc(G)) \unlhd G$ because $\cap_{i=1}^n (G_i \cap Soc(G)) \leq Z(G)$.
But $\cap_{i=1}^n (G_i \cap Soc(G)) \leq \cap_{i=1}^n G_i$.
Thus $1 \neq \cap_{i=1}^n (G_i \cap Soc(G)) \leq core_G{\cap_{i=1}^n G_i}$
in contradiction with the faithfulness of $R=\{G_1,\dotsc,G_n\}$.
So $R_{Soc(G)}$ is faithful, and thus by Lemma
\ref{lemma_minimal_representation_coprime_orders_partitions}
it decomposes into faithful representations of $Z(G)[p_1],\dotsc,Z(G)[p_m]$ as desired.

\item Fix some $1 \leq j \leq m$ and $1 \leq i_o \leq n_j$ and assume, for the sake of contradiction,
that $\cap_{K \in {R_j\setminus\{G_i^{(j)}\}}} (K \cap Z(G)[p_j]) = 1$.
That is, $(\cap_{K \in {R_j\setminus\{G_i^{(j)}\}}} G_i^{(j)}) \cap Z(G)[p_j] = 1$.
Thus, by the faithful decomposition of $(Z(G)[p_1],\dotsc,Z(G)[p_m])_{R_{Soc(G)}}$ proved in conclusion $(1)$,
we get $(\cap_{K \in {R\setminus\{G_i^{(j)}\}}} G_i) \cap Soc(G) = 1$.
Then $core_G(\cap_{K \in {R\setminus\{G_i^{(j)}\}}} G_i) \cap Soc(G) = 1$.
Therefore $core_G(\cap_{K \in {R\setminus\{G_i^{(j)}\}}} G_i) = 1$.
Thus $R\setminus\{G_i^{(j)}\}$ is a faithful representation of G, contradicting the minimality of $R$.

\item Fix some $1 \leq j \leq m$ and $1 \leq i_o \leq n_j$ and assume, for the sake of contradiction,
that $dim(G_{i_0}^{(j)} \cap Z(G)[p_j]) = dim(Z(G)[p_j])$.
Then $G_{i_0}^{(j)} \cap Z(G)[p_j] = Z(G)[p_j]$. Thus, $\cap_{i \neq i_0} (G_i^{(j)} \cap Z(G)[p_j]) =
\cap_{i=1}^n (G_i^{(j)} \cap Z(G)[p_j]) = 1$, by conclusion $(1)$, contradicting conclusion $(2)$.\\
Assume, again - for the sake of contradiction, that $dim(G_{i_0}^{(j)} \cap Z(G)[p_j]) < dim(Z(G)[p_j]) - 1$.
Then $dim(Z(G)[p_j]/(G_{i_0}^{(j)} \cap Z(G)[p_j])) \geq 2$.
But $Z(G)[p_j]/(G_{i_0}^{(j)} \cap Z(G)[p_j]) \cong Z(G)[p_j]G_{i_0}^{(j)}/G_{i_0}^{(j)}$ by the second isomorphism theorem.
So $Z(G)[p_j]G_{i_0}^{(j)}/G_{i_0}^{(j)}$ is a vector space spanned by a basis $z_1G_{i_0}^{(j)},\dotsc,z_rG_{i_0}^{(j)}$ for some $r \geq 2$
and $z_1,\dotsc,z_r \in Z(G)$. In particular, $z_1G_{i_0}^{(j)}$ and $z_2G_{i_0}^{(j)}$ are
linearly independent and therefore
$span\{z_1G_{i_0}^{(j)}\} \cap span\{z_2G_{i_0}^{(j)}\} = \{G_{i_0}^{(j)}\}$ and $span\{z_1G_{i_0}^{(j)}\}, span\{z_1G_{i_0}^{(j)}\} \neq \{G_{i_0}^{(j)}\}$.
So $G_{i_0}^{(j)} < \langle z1,G_{i_0}^{(j)} \rangle, \langle z2,G_{i_0}^{(j)} \rangle$ and
$\langle z1,G_{i_0}^{(j)} \rangle \cap \langle z2,G_{i_0}^{(j)} \rangle = G_{i_0}^{(j)}$
contradicting the fact that $G_{i_0}^{(j)}$ is meet-irreducible.
\end{enumerate}
\end{proof}

\begin{lemma}[lifting a representation back up from $Soc(G)$ to $G$] \label{lemma_induced_representation_of_p_group_lifting}
Let $G$ be a group belonging to $CS$. Let $R=\{G_1,\dotsc,G_n\}$ be a representation of $G$.
Then if the induced representation $R_{Soc(G)}$ is a faithful representation of Soc(G),
then $R$ is a faithful representation of $G$. 
\end{lemma}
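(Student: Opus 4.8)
The plan is to argue contrapositively at the level of cores. Write $N = \bigcap_{i=1}^n G_i$; by the description of faithfulness in Section \ref{section_background}, $R$ is faithful on $G$ precisely when $core_G(N) = 1$, so I would assume $core_G(N) \neq 1$ and derive a contradiction from the hypothesis that $R_{Soc(G)} = \{G_1 \cap Soc(G),\dotsc,G_n \cap Soc(G)\}$ is a faithful representation of $Soc(G)$.

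First I would note that $core_G(N)$ is a nontrivial normal subgroup of the finite group $G$, hence contains a minimal normal subgroup $M$ of $G$ (take a minimal element among the nontrivial normal subgroups of $G$ contained in $core_G(N)$; minimality inside $core_G(N)$ forces minimality in $G$). By the definition of the socle, $M \leq Soc(G)$. On the other hand $M \leq core_G(N) \leq N \leq G_i$ for every $i$, so $M \leq G_i \cap Soc(G)$ for every $i$.

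Next I would invoke $G \in CS$: since $Soc(G) \leq Z(G)$, the group $Soc(G)$ is abelian, so every subgroup of it is normal in it, and in particular $core_{Soc(G)}(G_i \cap Soc(G)) = G_i \cap Soc(G)$ for each $i$. Hence $M \leq \bigcap_{i=1}^n core_{Soc(G)}(G_i \cap Soc(G))$, which is $1$ because $R_{Soc(G)}$ is faithful on $Soc(G)$, contradicting $M \neq 1$. Therefore $core_G(N) = 1$ and $R$ is faithful on $G$, as claimed.

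The argument is short, and the only point needing care — where one should check nothing has been swept under the rug — is why controlling the induced representation merely on $Soc(G)$ suffices to kill $core_G(N)$ inside all of $G$. The substance of that step is the pair of standard facts that every nontrivial normal subgroup of a finite group contains a minimal normal subgroup and that every minimal normal subgroup lies in the socle, combined with the centrality of $Soc(G)$, which is what collapses the $core_{Soc(G)}(\cdot)$ appearing in the faithfulness hypothesis to the identity. (The same reasoning in fact works without assuming $G \in CS$: a minimal normal subgroup $M$ of $G$ contained in $G_i \cap Soc(G)$ is automatically normal in $Soc(G)$ and so lies in $core_{Soc(G)}(G_i \cap Soc(G))$ anyway; but the abelian case stated here is all that is needed.)
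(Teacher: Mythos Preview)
Your proof is correct and follows essentially the same route as the paper's: both argue by contradiction from $core_G(\bigcap_i G_i)\neq 1$, use that a nontrivial normal subgroup of a finite group meets $Soc(G)$ nontrivially (via a minimal normal subgroup), and then exploit that $Soc(G)$ is abelian so that the core in $Soc(G)$ is just the intersection. Your version is slightly more explicit in naming the minimal normal subgroup $M$, and your parenthetical observation that the hypothesis $G\in CS$ is not actually needed is a correct and worthwhile remark not made in the paper.
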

\begin{proof}
Assume, for the sake of contradiction, that $core_G \cap_{i=1}^n G_i \neq 1$.
Then $(core_G \cap_{i=1}^n G_i) \cap Soc(G) \neq 1$.
But then $1 \neq (core_G \cap_{i=1}^n G_i) \cap Soc(G) = (core_G \cap_{i=1}^n (G_i \cap Soc(G)) \leq
(core_{Soc(G)} \cap_{i=1}^n (G_i \cap Soc(G)))$ in contradiction to the faithfulness of $R_{Soc(G)}$.
\end{proof}

\begin{lemma} \label{lemma_flat_abelian_weakly_partition}
Let $r \geq 1$ be an integer and let $p_1,\dotsc,p_r$ be distinct primes.
Suppose that for each $1 \leq i \leq r$ we have:
\begin{itemize}
	\item $G_i$ and $H_i$ are elementary abelian $p_i$-groups of dimensions $m_i$ and $n_i$ respectively.
	\item $\{v_1^{(i)},\dotsc,v_{m_i+n_i}^{(i)}\}$ is a basis for the vector space $G_i \times H_i$
	\item $K_j^{(i)} = span\{v_k^{(i)} \mid 1 \leq k \leq m_i+n_i \text{ and } k \neq j\}$ for each $1 \leq j \leq m_i+n_i$.
\end{itemize}
Let $G = \prod_{i=1}^r G_i$ and $H = \prod_{i=1}^r H_i$ and
let $R = \{K_j^{(i)}\}_{\substack{1 \leq i \leq r\\1 \leq j \leq m_i+n_i}}$ be a (faithful) representation of $G \times H$.
Then $(G, H)_R$ admits a weak faithful decomposition.
\end{lemma}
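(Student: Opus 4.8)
The plan is to reduce the statement, one prime at a time, to a purely linear‑algebraic fact about coordinate hyperplanes, and then to settle that fact by a generalized Laplace expansion of a determinant.

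First I would record the reduction to a single prime. We have $G\times H=\prod_{i=1}^r(G_i\times H_i)$, the factors $G_i\times H_i$ have pairwise coprime orders, and each $K_j^{(i)}$ contains the whole of $G_{i'}\times H_{i'}$ for every $i'\ne i$, so everything splits along the index $i$. Concretely, it suffices to produce for each $1\le i\le r$ a partition $\{1,\dots,m_i+n_i\}=S_i\sqcup T_i$ such that $\bigcap_{j\in S_i}(K_j^{(i)}\cap G_i)=1$ inside $G_i$ and $\bigcap_{j\in T_i}(K_j^{(i)}\cap H_i)=1$ inside $H_i$, where $G_i$ and $H_i$ are viewed as subgroups of $G_i\times H_i$, hence of $G\times H$. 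Given such partitions, set $R'=\{K_j^{(i)}\mid 1\le i\le r,\ j\in S_i\}$ and $R''=\{K_j^{(i)}\mid 1\le i\le r,\ j\in T_i\}$, so that $R=R'\uplus R''$. A short computation with the direct‑product structure gives $\bigcap_{K\in R'}(K\cap G)=\prod_{i=1}^r\bigcap_{j\in S_i}(K_j^{(i)}\cap G_i)=1$; since $core_G(K\cap G)\le K\cap G$, this forces $\bigcap_{K\in R'}core_G(K\cap G)=1$, i.e.\ the induced representation $R_G'$ is faithful, and symmetrically $R_H''$ is faithful. That is exactly a weak faithful decomposition of $(G,H)_R$.

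Next I would fix one $i$ (dropping superscripts) and translate the two conditions into linear algebra. Write $V=G_i\times H_i$, an $\mathbb{F}_{p_i}$-vector space of dimension $d=m_i+n_i$, and set $A=G_i\times\{0\}$, $B=\{0\}\times H_i$, so $V=A\oplus B$; the $v_k$ form a basis of $V$ and $K_j=span\{v_k\mid k\ne j\}$. Since $\bigcap_{j\in S}K_j=span\{v_k\mid k\notin S\}$, the required conditions become: find a partition $\{1,\dots,d\}=S\sqcup T$ with $span\{v_k\mid k\in T\}\cap A=0$ and $span\{v_k\mid k\in S\}\cap B=0$. Counting dimensions forces $|S|=m_i$ and $|T|=n_i$, and, writing $\pi_A$ and $\pi_B$ for the projections of $V$ onto $A$ and $B$, the two conditions say precisely that $\{\pi_A(v_k)\mid k\in S\}$ is a basis of $A$ and $\{\pi_B(v_k)\mid k\in T\}$ is a basis of $B$.

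The heart of the argument — and the only step I expect to need real work — is producing that partition. I would fix bases of $A$ and of $B$, let $M\in GL_d(\mathbb{F}_{p_i})$ be the matrix whose $k$-th column lists the coordinates of $v_k$ in the resulting basis of $V$ (with the $A$-coordinates on top), and observe that for an $m_i$-subset $S$ the set $\{\pi_A(v_k)\mid k\in S\}$ is a basis of $A$ iff the submatrix of $M$ on the first $m_i$ rows and the columns indexed by $S$ is invertible, and dually that for $T=S^c$ the set $\{\pi_B(v_k)\mid k\in T\}$ is a basis of $B$ iff the submatrix on the last $n_i$ rows and the columns $T$ is invertible. The generalized Laplace expansion of $\det M$ along its first $m_i$ rows expresses $\det M$ as a signed sum, over all $m_i$-subsets $S$, of that $m_i\times m_i$ minor times the complementary $n_i\times n_i$ minor; since $\det M\ne 0$ some summand is nonzero, and the corresponding pair $(S,T=S^c)$ is exactly the partition we need. (Equivalently, this is the matroid union theorem for the two linear matroids on $\{1,\dots,d\}$ coming from $\pi_A$ and $\pi_B$, whose hypothesis $\operatorname{rank}_A(X)+\operatorname{rank}_B(X)\ge|X|$ holds because $v\mapsto(\pi_A(v),\pi_B(v))$ is the identity on $V$; I would use whichever formulation reads more cleanly.) Reassembling these per‑prime partitions as in the first step yields the weak faithful decomposition $R=R'\uplus R''$.
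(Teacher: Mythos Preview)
Your proof is correct and follows essentially the same route as the paper: reduce to a single prime, and then use the generalized Laplace expansion of the determinant of the change-of-basis matrix to find a column (in the paper, row) partition making both complementary minors nonsingular, which is exactly the weak faithful decomposition. The only cosmetic difference is that the paper handles the coprime reduction by invoking its earlier lemma that a minimal-degree representation of a product with coprime factors admits a faithful decomposition, whereas you observe directly that each $K_j^{(i)}$ already contains every $G_{i'}\times H_{i'}$ with $i'\neq i$; both are immediate.
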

\begin{proof}
By the formula for $\mu$ for abelian groups given in Lemma \ref{theorem_mu_for_abelian_groups}
we know that $R$ is a minimal-degree faithful representation of $G \times H$.
Therefore, since the orders of $G_i \times H_i$ and $G_j \times H_j$ are coprime whenever $i \neq j$
we conclude, by Lemma \ref{lemma_minimal_representation_coprime_orders_partitions},
that $(G_1 \times H_1,\dotsc,G_r \times H_r)_R$ admits a faithful decomposition.
It is thus sufficient to prove, for each $1 \leq i \leq r$, that $(G_i, H_i)_{R_{G_i \times H_i}}$
admits a weak faithful decomposition.
Fix some $1 \leq i_0 \leq r$ and denote
$m=m_{i_0}$, $n=n_{i_0}$ and $v_j = v_j^{(i_0)}$ for each $1 \leq j \leq m_{i_0}+n_{i_0}$.
Form a matrix $M$ that has $\{v_1,\dotsc,v_{m+n}\}$ as its rows.
By applying the matrix decomposition whose definition and existence are given in Lemma \ref{lemma_invertible_matrix_faithful_decomposition} in the appendix to the invertible matrix $M$ with parameter $m$, we can conclude that we can assume that the matrix M is of the form:
$
M=\left[
\begin{array}{cc}
A & B \\
C & D
\end{array}\right]
$
where $A$ is an $m \times m$ matrix, $D$ is an $(n-m) \times (n-m)$ matrix and both $A$ and $D$ are invertible.
Partition $R$ as $R = R' \biguplus R''$
where $R'=\{K_1,\dotsc,K_m\}$ and $R''=\{K_{m+1},\dotsc,K_{m+n}\}$.
Now $R'_G$ is a faithful representation of $G$ because
$\cap_{K \in R'_G}K' =
\cap_{K \in R'}(K \cap G) =
(\cap_{K \in R'}K) \cap G =
span\{v_{m+1},\dotsc,v_{m+n}\} \cap G\overset{(a)}{=} 1
$
where equality $(a)$ is due to the fact that the submatrix $D$ of $M$ is invertible.
Similarly, $R''_H$ is a faithful representation of $H$ and thus
$(G,H)_R$ admits a weak faithful decomposition as claimed.
\end{proof}

\begin{lemma} \label{lemma_mu_for_direct_product_of_central_socle_groups}
Let G and H be groups belonging to $CS$, then $\mu(G \times H) = \mu(G) + \mu(H)$.
\end{lemma}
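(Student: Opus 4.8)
The plan is to assemble the three preceding lemmas. Since $CS$ is closed under direct products we have $G\times H\in CS$, and by Lemma~\ref{lemma_socle_of_direct_product} $Soc(G\times H)=Soc(G)\times Soc(H)\le Z(G)\times Z(H)=Z(G\times H)$. By Proposition~\ref{proposition_minimal_representation_by_meet_irreducible_subgroups} fix a minimal-degree faithful representation $R=\{G_1,\dots,G_n\}$ of $G\times H$ in which every $G_i$ is meet-irreducible; by Lemma~\ref{lemma_minimal_representation_weakly_partitions_implies_direct_product_formula} it suffices to exhibit a weak faithful decomposition of $(G,H)_R$. Let $p_1,\dots,p_m$ be the primes dividing $|Z(G\times H)|$ and put $V_j=Z(G\times H)[p_j]=Z(G)[p_j]\times Z(H)[p_j]$, so that $Soc(G\times H)=\prod_{j=1}^m V_j$ (some of the factors $Z(G)[p_j]$ or $Z(H)[p_j]$ may be trivial, which will cause no difficulty below).

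First I would apply Lemma~\ref{lemma_induced_representation_of_central_socle_group_on_socle} to $G\times H$ and $R$: it tells us that $R_{Soc(G\times H)}$ is faithful, that $(V_1,\dots,V_m)_{R_{Soc(G\times H)}}$ decomposes faithfully as $R_{Soc(G\times H)}=\uplus_{j=1}^m R_j$, that no $R_j$ has a redundant transitive constituent, and that every constituent of $R_j$ is a hyperplane of $V_j$. The crucial observation — and the one substantive step of the proof — is that these facts pin each $R_j$ down completely: writing its constituents as $\ker\phi$ with $\phi\in V_j^{*}\setminus\{0\}$, faithfulness of $R_j$ says the functionals span $V_j^{*}$, while the absence of a redundant constituent says none of them lies in the span of the others; hence they form a basis of $V_j^{*}$, and the constituents of $R_j$ are exactly the coordinate hyperplanes associated with the dual basis of $V_j$. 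Thus $R_{Soc(G\times H)}$ is precisely of the form required by Lemma~\ref{lemma_flat_abelian_weakly_partition}, with $G_i:=Z(G)[p_i]$ and $H_i:=Z(H)[p_i]$ (so $\prod G_i=Soc(G)$ and $\prod H_i=Soc(H)$). That lemma then produces a weak faithful decomposition $R_{Soc(G\times H)}=T'\uplus T''$, i.e.\ $(T')_{Soc(G)}$ is a faithful representation of $Soc(G)$ and $(T'')_{Soc(H)}$ is a faithful representation of $Soc(H)$.

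It remains to lift this back up to $G\times H$. Let $R'\subseteq R$ consist of those $G_i$ whose induced constituent $G_i\cap Soc(G\times H)$ lies in $T'$, and let $R''$ be the remaining ones, so $R=R'\uplus R''$. Identifying $G$ with $G\times1\le G\times H$, the induced representation $(R')_G$ of $G$ satisfies $\bigl((R')_G\bigr)_{Soc(G)}=\{\,G_i\cap Soc(G):G_i\in R'\,\}=(T')_{Soc(G)}$, which is faithful; since $G\in CS$, Lemma~\ref{lemma_induced_representation_of_p_group_lifting} applied to $G$ and the representation $(R')_G$ yields that $(R')_G$ is faithful. Symmetrically $(R'')_H$ is a faithful representation of $H$, so $R=R'\uplus R''$ is a weak faithful decomposition of $(G,H)_R$, and Lemma~\ref{lemma_minimal_representation_weakly_partitions_implies_direct_product_formula} gives $\mu(G\times H)=\mu(G)+\mu(H)$.

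I expect the only genuine obstacle to lie in the middle paragraph: seeing that the rather mild-looking conclusions of Lemma~\ref{lemma_induced_representation_of_central_socle_group_on_socle} (faithful, no redundancy, hyperplanes) conspire — via the short dual-basis argument — to force $R_{Soc(G\times H)}$ into the exact coordinate-hyperplane shape that Lemma~\ref{lemma_flat_abelian_weakly_partition} is designed to consume. The real technical work has already been done in the earlier lemmas; the rest here is bookkeeping, the one point needing care being that inducing $R'$ first to $G\times1$ and then to $Soc(G)\times1$ gives the same thing as inducing $R$ to $Soc(G\times H)$ and then restricting to the $Soc(G)$-factor, which is what makes Lemma~\ref{lemma_induced_representation_of_p_group_lifting} applicable.
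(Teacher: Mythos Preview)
Your proposal is correct and follows essentially the same route as the paper: take a meet-irreducible minimal representation, use Lemma~\ref{lemma_induced_representation_of_central_socle_group_on_socle} to put the induced representation on $Soc(G\times H)$ into the shape required by Lemma~\ref{lemma_flat_abelian_weakly_partition}, and then lift via Lemma~\ref{lemma_induced_representation_of_p_group_lifting}. The only differences are cosmetic: where the paper invokes the appendix Lemma~\ref{lemma_vector_spaces_in_faithful_mode} to identify the hyperplane constituents as coordinate hyperplanes, you give the equivalent dual-basis argument inline, and you spell out the lifting bookkeeping (that $((R')_G)_{Soc(G)}=(T')_{Soc(G)}$) which the paper leaves implicit.
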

\begin{proof}
Let $R=\{K_1,\dotsc,K_n\}$ be a minimal-degree faithful representation of $G \times H$.
By Lemma \ref{proposition_minimal_representation_by_meet_irreducible_subgroups} we can assume that
$K_1,\dotsc,K_n$ are all meet-irreducible.
The properties of the the induced representation on $Soc(G \times H)$ proved in
Lemma \ref{lemma_induced_representation_of_central_socle_group_on_socle}
together with the linear algebra result proved in Lemma \ref{lemma_vector_spaces_in_faithful_mode} in the appendix
show that $Soc(G)$, $Soc(H)$ and the representation $R_{Soc(G \times H)}$ fit the hypothesis
of Lemma \ref{lemma_flat_abelian_weakly_partition} and therefore
$(R_{Soc(G)}, R_{Soc(H)})_{R_{Soc(G \times H)}}$ admits a weak faithful decomposition.
Thus, by Lemma \ref{lemma_induced_representation_of_p_group_lifting},
$(G,H)_R$ admits a weak faithful decomposition too.
Therefore, by Lemma \ref{lemma_weak_partition_inequality} we conclude that
$\mu(G \times H) = \mu(G) + \mu(H)$ as claimed.
\end{proof}

\section{A larger collection for which $\mu$ is additive}
We now extend the collection $CS$ to a larger collection for which the function $\mu$ is additive.
The extended collection, denoted $CSE$, is defined as the collection of groups $G$ for which
there is a subgroup $H \leq G$ such that $H \in CS$ and $\mu(H) = \mu(G)$.
This extension idea first appeared in \cite{wright75}, where a collection $\mathfrak{G}$ was similarly defined as
the collection of groups $G$ for which there is a nilpotent subgroup such that $\mu(H) = \mu(G)$.
The collection $\mathfrak{G}$ is obviously a subcollection of $CSE$ since the collection of nilpotent groups
is a (proper) subcollection of $CS$. We show $\mathfrak{G}$ is a proper subcollection of $CSE$ by giving an example
of a group in $CSE$ (actually, in $CS$, which is subcollection of $CSE$) that does not belong to $\mathfrak{G}$.

We begin by proving that $CSE$ is closed under taking direct products and that
$\mu$ is additive for groups belonging to $CSE$.

\begin{lemma}
Let the groups $G$ and $H$ belong to the collection $CSE$.
Then:
\begin{enumerate}
	\item $G \times H$ belongs to $CSE$.
	\item $\mu(G \times H) = \mu(G) + \mu(H)$.
\end{enumerate}
\end{lemma}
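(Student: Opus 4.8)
The plan is to choose witnessing subgroups for $G$ and for $H$, take their direct product inside $G \times H$, and then squeeze $\mu(G \times H)$ between two quantities that turn out to be equal.

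First I would unwind the definition of $CSE$. Since $G, H \in CSE$, there are subgroups $G_0 \leq G$ and $H_0 \leq H$ with $G_0, H_0 \in CS$ and $\mu(G_0) = \mu(G)$, $\mu(H_0) = \mu(H)$. In particular $G_0$ and $H_0$ are nontrivial finite groups, so $G$, $H$, and hence $G \times H$ are nontrivial finite groups. Next, $G_0 \times H_0$ is a subgroup of $G \times H$, and since $CS$ is closed under direct products (Lemma \ref{lemma_socle_of_direct_product}), $G_0 \times H_0 \in CS$. Applying Lemma \ref{lemma_mu_for_direct_product_of_central_socle_groups} to the two $CS$-groups $G_0$ and $H_0$ gives
\[
\mu(G_0 \times H_0) = \mu(G_0) + \mu(H_0) = \mu(G) + \mu(H).
\]

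Now I would sandwich. On one side, restricting any minimal-degree faithful permutation representation of $G \times H$ to the subgroup $G_0 \times H_0$ produces a faithful representation of that subgroup, so $\mu(G_0 \times H_0) \leq \mu(G \times H)$. On the other side, the subadditivity inequality recalled in Section \ref{section_basics} gives $\mu(G \times H) \leq \mu(G) + \mu(H)$. Chaining these,
\[
\mu(G) + \mu(H) = \mu(G_0 \times H_0) \leq \mu(G \times H) \leq \mu(G) + \mu(H),
\]
so every inequality is an equality. This yields part (2) directly, and it also shows $\mu(G_0 \times H_0) = \mu(G \times H)$; since $G_0 \times H_0$ is a subgroup of $G \times H$ lying in $CS$ and realizing $\mu(G \times H)$, this is precisely the condition witnessing $G \times H \in CSE$, which is part (1).

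I do not expect a genuine obstacle here: the whole argument is the short squeeze above, and its two external inputs — additivity of $\mu$ on $CS$ (Lemma \ref{lemma_mu_for_direct_product_of_central_socle_groups}) and closure of $CS$ under products — are already available. The one point I would state carefully is the monotonicity $K \leq G \Rightarrow \mu(K) \leq \mu(G)$: it follows because a subgroup of $S_n$ embeds in $S_n$, so restricting a degree-$\mu(G)$ faithful representation of $G$ to $K$ stays faithful. This fact is used implicitly in Section \ref{section_basics} but is not isolated as a lemma, so I would spell it out when invoking $\mu(G_0 \times H_0) \leq \mu(G \times H)$.
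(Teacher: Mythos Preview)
Your proposal is correct and follows essentially the same argument as the paper: pick $CS$-witnesses $G_0 \leq G$, $H_0 \leq H$, use closure of $CS$ under products together with additivity of $\mu$ on $CS$ to get $\mu(G_0 \times H_0) = \mu(G) + \mu(H)$, then squeeze via subadditivity and subgroup monotonicity to force equality throughout and read off both conclusions. Your care in isolating the monotonicity step $\mu(G_0 \times H_0) \leq \mu(G \times H)$ is appropriate, as the paper uses it here without comment (it is later recorded as Lemma~\ref{lemma_mu_by_subgroup}).
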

\begin{proof}
On one hand $\mu(G \times H) \leq \mu(G) + \mu(H)$.
On the other hand, since $G$ and $H$ belong to $CSE$, there are subgroups $G_1$ and $H_1$
of $G$ and $H$ resepectively such that $G_1$ and $H_1$ both belong to $CS$ and
$\mu(G_1) = \mu(G)$ and $\mu(H_1) = \mu(H)$.
Therefore $\mu(G \times H) \overset{(a)}{\geq} \mu(G_1 \times H_1) = \mu(G_1) + \mu(H_1) = \mu(G) + \mu(H)$.
Thus $\mu(G \times H) = \mu(G) + \mu(H)$, proving conclusion $(2)$. Therefore inequality
$(a)$ is in fact an equality and thus $\mu(G_1 \times H_1) = \mu(G \times H)$ which proves conclusion $(1)$
because the subgroup $G_1 \times H_1$ of $G \times H$ belongs to $CS$
since $CS$ is closed under taking direct products.
\end{proof}

We proceed to show that the binary icosahedral group $SL(2,5)$ belongs to $CSE$, but not to $\mathfrak{G}$.
First, $SL(2,5)$ belongs to $CS$ (and thus to $CSE$) because its only proper normal subgroup is its center,
$\{+1,-1\}$.
To show that $SL(2,5)$ does not belong to $\mathfrak{G}$ we first note that its nilpotent subgroups are
isomorphic to cyclic groups of orders 1,2,3,4,5,6 and 10, or to $Q_8$.
Of these groups, the one with the largest minimal-degree is $Q_8$, for which $\mu(Q_8)=8$
(see Lemma \ref{theorem_incompressible_groups}).
Thus, it is sufficient to show that $\mu(SL(2,5)) > 8$.
In fact, we will show that $\mu(SL(2,5))=24$:
Recall that $Z(SL(2,5))=\{-1,+1\}$ is normal in $SL(2,5)$.
Thus any faithful representation of $SL(2,5)$ must be given by a collection of subgroups of $SL(2,5)$ of which at least
one does not contain the element $-1$.
But, the element $-1$ is the only element of order 2 in $SL(2,5)$.
Therefore, any subgroup of $SL(2,5)$ of even order contains the element $-1$.
Thus, any faithful representation of $SL(2,5)$ must be given by a collection of subgroups of which at least one is
of odd order. But the largest subgroup of $SL(2,5)$ of odd order is a cyclic group of order $5$.
So we can already conclude that the degree of any faithful representation of $SL(2,5)$ is at least 120/5=24.
Conversely, any subgroup of $SL(2,5)$ of odd order does not contain $Z(SL(2,5))$, which is the unique
minimal normal subgroup of $SL(2,5)$. Therefore the representation
$\{\mathbb{Z}_5\}$ is a minimal-degree representation of $SL(2,5)$ and its degree is 24.

\section{Semidirect Products} \label{section_semidirect_products}
\begin{lemma} \label{lemma_mu_semidirect_inequality}
Let $G$ and $H$ be nontrivial finite groups. Then $\mu(G \rtimes H) \leq |G| + \mu(H)$.
\end{lemma}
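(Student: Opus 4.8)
The plan is to assemble a faithful permutation representation of $G \rtimes H$ from two components: the action of $G\rtimes H$ on the left cosets of the complement $H$, which contributes degree $|G|$, and a minimal-degree faithful representation of $H$ pulled back along the quotient map $\pi\colon G\rtimes H \to H$ (whose kernel is $G$), which contributes degree $\mu(H)$. Since $G$ is normal in $G\rtimes H$ with trivial intersection with $H$, the kernels of these two parts intersect trivially, which gives faithfulness.

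Concretely, first I would fix a minimal-degree faithful representation $\{H_1,\dots,H_m\}$ of $H$, so $m=\mu(H)$, and take $R=\{H\}\cup\{\pi^{-1}(H_1),\dots,\pi^{-1}(H_m)\}$ as a candidate representation of $G\rtimes H$. Its degree is $[G\rtimes H:H] + \sum_{i=1}^m [G\rtimes H:\pi^{-1}(H_i)] = |G| + \sum_{i=1}^m [H:H_i] = |G| + \mu(H)$, using that $\pi$ is surjective so $[G\rtimes H:\pi^{-1}(H_i)] = [H:H_i]$.

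Next I would verify faithfulness, i.e.\ that $core_{G\rtimes H}(H)\cap\bigcap_{i=1}^m core_{G\rtimes H}(\pi^{-1}(H_i)) = 1$. The key routine fact is that for a surjection $\pi$ the core of a preimage is the preimage of the core, so $\bigcap_{i=1}^m core_{G\rtimes H}(\pi^{-1}(H_i)) = \pi^{-1}(\bigcap_{i=1}^m core_H(H_i)) = \pi^{-1}(1) = G$, where we used that $\{H_1,\dots,H_m\}$ is faithful. Hence the kernel of $R$ is contained in $core_{G\rtimes H}(H)\cap G$; since $core_{G\rtimes H}(H)\le H$ and $H\cap G = 1$ (as $H$ is a complement to the normal subgroup $G$), this kernel is trivial, so $R$ is faithful and $\mu(G\rtimes H)\le\mu_{G\rtimes H}(R) = |G|+\mu(H)$.

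There is no serious obstacle here; the only things requiring a little care are the bookkeeping with cores of preimages and the observation that $core_{G\rtimes H}(H)$, being a subgroup of $H$, must meet $G$ trivially --- this is precisely the place where the internal semidirect product structure is used. An equivalent and slightly more vivid packaging is to let $G\rtimes H$ act on the underlying set of $G$ by the affine action in which $G$ acts by left translation and $H$ acts via its given action on $G$; the kernel of this degree-$|G|$ action is the kernel of the map $H\to Aut(G)$, again a subgroup of $H$, and one appends the pulled-back faithful representation of $H$ exactly as above.
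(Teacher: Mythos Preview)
Your proof is correct and is essentially the same construction as the paper's, just expressed in the subgroup--collection language of Section~\ref{section_background} rather than via an explicit monomorphism: the coset action of $G\rtimes H$ on $(G\rtimes H)/H$ is exactly the affine action $g\mapsto g_0\varphi_{h_0}(g)$ that the paper writes down as the first coordinate of its embedding $\rho:G\rtimes H\hookrightarrow Sym(G)\times H$, and your pullback of a minimal faithful representation of $H$ along $\pi$ is the second coordinate. (One harmless slip: $m$ is the number of transitive constituents, not $\mu(H)$; the degree is $\sum_i[H:H_i]=\mu(H)$, which is what you in fact use in the computation.)
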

\begin{proof}
It is sufficent to embed $G \rtimes H$ in $Sym(G) \times H$.
Let the multiplication in $G \rtimes H$ be defined by $(g_1,h_1)(g_2,h_2) = (g_1\varphi_{h_1}(g_2), h_1h_2)$
for any $g_1,g_2 \in G$ and $h_1,h_2 \in H$ where $\varphi : H \rightarrow Aut(G)$ is a homomorphism.
We show that $\rho : G \rtimes H \rightarrow Sym(G) \times H$ defined by
$\rho(g_0, h_0) = ((g \mapsto g_0\varphi_{h_0}(g)), h_0)$ is a monomorphism.
The function $\rho$ is a homomorphism because
$\rho((g_1, h_1)(g_2, h_2)) = \rho((g_1\varphi_{h_1}(g_2), h_1h_2)) = (g \mapsto (g_1\varphi_{h_1}(g_2)\varphi_{h_1h_2}(g)), h_1h_2) =
(g \mapsto g_1\varphi_{h_1}(g_2\varphi_{h_2}(g)), h_1h_2) = (g \mapsto g_1\varphi_{h_1}(g), h_1)(g \mapsto g_2\varphi_{h_2}(g), h_2) =
\rho(g_1, h_1)\rho(g_2,h_2)$.
The homomorphism $\rho$ is injective because $\rho(g_0,h_0) = (id,1)$ implies $((g \mapsto g_0\varphi_{h_0}(g)),h_0) = (id,1)$, that is
$h_0 = 1$ and $(g \mapsto g_0g) = (g \mapsto g_0\varphi_{1}(g)) = (g \mapsto g_0\varphi_{h_0}(g)) = id$.
Thus we must have $g_0 = 1$ and so $ker(\rho) = (id, 1)$.
\end{proof}

\section{Compression Ratio} \label{section_compression_ratio}
\begin{definition}[compression ratio]
Let $G$ be a finite group.
Then the \emph{compression ratio} of $G$ is defined as $cr(G)=\frac{|G|}{\mu(G)}$
\end{definition}

For any finite group $G$ we have $cr(G) \geq 1$ since $\mu(G) \leq |G|$ by Cayley's theorem.

\begin{lemma}[monotonicity of compression ratio] \label{lemma_mu_by_subgroup}
Let $G$ be a finite group and let $H \leq G$ be a subgroup of $G$. Then $cr(H) \leq cr(G)$
\end{lemma}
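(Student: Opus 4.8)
The claim is that if $H \leq G$ then $cr(H) \leq cr(G)$, i.e. $\frac{|H|}{\mu(H)} \leq \frac{|G|}{\mu(G)}$, which rearranges to $\mu(G) \cdot |H| \leq \mu(H) \cdot |G|$, equivalently $\frac{\mu(G)}{\mu(H)} \leq \frac{|G|}{|H|} = [G:H]$. So the plan is to prove the single inequality $\mu(G) \leq [G:H]\cdot \mu(H)$. The natural way to produce an upper bound on $\mu(G)$ is to exhibit a faithful permutation representation of $G$ of the desired degree, built from a minimal-degree faithful representation of $H$.

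First I would take a minimal-degree faithful representation $\{H_1,\dots,H_m\}$ of $H$, so $\sum_{i=1}^m [H:H_i] = \mu(H)$ and $\bigcap_{i=1}^m core_H(H_i) = 1$. Each $H_i$ is in particular a subgroup of $G$, so the multiset $R = \{H_1,\dots,H_m\}$ is also a collection of subgroups of $G$ and induces a permutation representation of $G$ of degree $\sum_{i=1}^m [G:H_i] = \sum_{i=1}^m [G:H][H:H_i] = [G:H]\cdot \mu(H)$. The remaining point is faithfulness: I need $\bigcap_{i=1}^m core_G(H_i) = 1$. The key step is the observation that $core_G(H_i) \leq H_i \leq H$, so $core_G(H_i)$ is a subgroup of $H$ which is normal in $G$, hence in particular normal in $H$ and contained in $H_i$; therefore $core_G(H_i) \leq core_H(H_i)$. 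Intersecting over $i$ gives $\bigcap_i core_G(H_i) \leq \bigcap_i core_H(H_i) = 1$, so $R$ is a faithful representation of $G$. Hence $\mu(G) \leq \mu_G(R) = [G:H]\cdot\mu(H)$, which is the inequality we wanted.

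The only subtlety — and the one place I'd be careful — is the containment $core_G(H_i) \leq core_H(H_i)$: it uses that $core_G(H_i)$ is already inside $H$ (because it is inside $H_i \leq H$), so it is a candidate for the largest subgroup of $H_i$ normal in $H$, giving the inequality; the reverse inequality is false in general and is not needed. Everything else is a direct computation with indices. I expect no real obstacle; this is essentially a one-line argument once the faithfulness of the inflated representation is checked.
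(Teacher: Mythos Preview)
Your proof is correct and follows essentially the same approach as the paper: reduce to $\mu(G)\le [G:H]\mu(H)$, view a minimal faithful representation $\{H_1,\dots,H_m\}$ of $H$ as a representation of $G$, compute its degree as $[G:H]\mu(H)$, and verify faithfulness via the containment $core_G(H_i)\le core_H(H_i)$. The paper phrases the last step as $core_G(\cap_i H_i)\le core_H(\cap_i H_i)$, but this is the same observation.
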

\begin{proof}
The inequality $cr(H) \leq cr(G)$ is equivalent to the inequality $\mu(G) \leq [G:H]\mu(H)$.
Therefore, it is sufficient to construct a faithful permutation representation of $G$ of degree $[G:H]\mu(H)$.
Let $\{H_1,\dotsc,H_n\}$ be a minimal-degree permutation representation of $H$.
That is $core_H(\cap_{i=1}^{n}H_i) = 1$ and $\sum_{i=1}^n[H:H_i]=\mu_H(\{H_1,\dotsc,H_n\}) = \mu(H)$.
The representation $\{H_1,\dotsc,H_n\}$ can also be viewed as a representation of $G$.
We show that it is faithful and of the desired degree:
The faithfulness of $\{H_1,\dotsc,H_n\}$ as a representation of $G$ follows because $core_G(\cap_{i=1}^{n}H_i) \leq core_H(\cap_{i=1}^{n}H_i) = 1$ and thus $core_G(\cap_{i=1}^{n}H_i) = 1$.
The degree of $\{H_1,\dotsc,H_n\}$ as a representation of $G$ is $\mu_G(\{H_1,\dotsc,H_n\}) = \sum_{i=1}^n[G:H_i] = \sum_{i=1}^n[G:H][H:H_i] =
[G:H]\sum_{i=1}^n[H:H_i] = [G:H]\mu_H(\{H_1,\dotsc,H_n\}) = [G:H]\mu(H)$.
\end{proof}

A finite group $G$ is called $incompressible$ if $cr(G)=1$.
The following characterization of incompressible groups is due to \cite{johnson71}.
We strengthen the conclusion described in \cite{johnson71} by stating that if a group has a compression
ratio larger than 1, then its compression ratio is at least 1.2 (this is tight because $cr(\mathbb{Z}_6)=1.2$).

\begin{theorem}[incompressible groups] \label{theorem_incompressible_groups}
Let $G$ be a nontrivial finite group.
The following conditions are equivalent:
\begin{itemize}
	\item The group $G$ is incompressible (that is, $cr(G)=1$).
	\item The group $G$ is of one of the following types:
	\begin{enumerate}[(a)]
		\item Cyclic group of prime power order
		\item Generalized quaternion group of order $2^n$ (for $n \geq 3$)
		\item The Klein four-group $V_4$
	\end{enumerate}
\end{itemize}
Further, $cr(G) < 1.2$ if and only if $cr(G)=1$.
\end{theorem}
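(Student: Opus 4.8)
The plan is to split the theorem into the equivalence (characterizing incompressible groups) and the refinement ($cr(G) < 1.2 \Rightarrow cr(G) = 1$), and to handle the refinement first since it is the cleaner statement. For the refinement, I would argue by contradiction: suppose $1 < cr(G) < 1.2$ and let $G$ be a minimal counterexample with respect to $|G|$. By Lemma \ref{lemma_mu_by_subgroup} (monotonicity of $cr$), every proper subgroup $H$ of $G$ satisfies $cr(H) \leq cr(G) < 1.2$, so by minimality every proper subgroup is incompressible, i.e.\ is of type (a), (b), or (c). I would also want a quotient version: for $N \unlhd G$, a faithful representation of $G/N$ pulls back to a faithful representation of $G$ of the same degree, so $\mu(G) \leq [N:1]\cdot$ (nothing) — more carefully, $\mu(G/N) \geq \mu(G)/[G{:}\,?]$ is not immediate, so I would instead lean on the subgroup structure. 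The key numerical input is that a minimal-degree faithful representation $R = \{G_1,\dots,G_n\}$ of $G$ satisfies $\mu(G) = \sum [G:G_i]$ with $\bigcap core_G(G_i) = 1$; if $G$ is not incompressible this forces $n \geq 2$ (a single core-free subgroup $G_1$ with $[G:G_1] = \mu(G) = |G|$ would mean $G_1 = 1$, giving $cr(G) = 1$) or $n = 1$ with $G_1 \neq 1$ but then $[G:G_1] < |G|$ and we must rule this out. I expect the heart of the matter to be a short case analysis of small configurations: either $G$ has a unique minimal normal subgroup (so $\mu(G) = [G:G_1]$ for a single meet-irreducible $G_1$, via Proposition \ref{proposition_minimal_representation_by_meet_irreducible_subgroups}) or it has at least two, and in the latter case $G \hookrightarrow (G/N_1)\times(G/N_2)$ with both quotients smaller, letting us bound $\mu(G) \leq \mu(G/N_1)+\mu(G/N_2)$ and compute.

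For the equivalence itself, the direction "$G$ of type (a),(b),(c) $\Rightarrow cr(G)=1$" is the easy one and is essentially classical: each such group has a \emph{unique} minimal normal subgroup, equivalently every nontrivial subgroup is core-essential, so any faithful representation must contain a core-free (hence trivial-core-intersection) family; one checks directly that for these groups the only core-free subgroup is the trivial one — for $\mathbb{Z}_{p^k}$ the unique minimal subgroup of order $p$ lies in every nontrivial subgroup; for generalized quaternion $Q_{2^n}$ the center of order $2$ lies in every nontrivial subgroup; for $V_4$ — wait, $V_4$ does \emph{not} have a unique minimal subgroup, so here I would argue differently: $V_4 = \mathbb{Z}_2 \times \mathbb{Z}_2$ and by Theorem \ref{theorem_mu_for_abelian_groups}, $\mu(V_4) = m(V_4) = 2 + 2 = 4 = |V_4|$, so $cr(V_4) = 1$ directly. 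For $\mathbb{Z}_{p^k}$, $\mu = p^k = |G|$ again by Theorem \ref{theorem_mu_for_abelian_groups}. So only the generalized quaternion case needs the "unique minimal normal subgroup forces $\mu(G) = |G|$" argument, which runs: any faithful $R$ must have some $G_i$ with $Z(Q_{2^n}) \not\leq G_i$, but every nontrivial subgroup of $Q_{2^n}$ contains the unique involution, hence $G_i = 1$ and $\mu(G) \geq |G|$, so $\mu(G) = |G|$.

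The converse direction — "$cr(G) = 1 \Rightarrow G$ is of type (a),(b),(c)" — is the substantial part. Here I would use that $cr(G) = 1$ means $\mu(G) = |G|$, so in a minimal-degree representation $\{G_1,\dots,G_n\}$ we have $\sum [G:G_i] = |G|$; since each $[G:G_i] \leq |G|$ and at least one $G_i$ is core-free, and a core-free $G_i$ of index $< |G|$ would make the sum exceed $|G|$ only if combined with others — actually the cleanest route: if $\mu(G) = |G|$ then in fact $n = 1$ and $G_1 = 1$, because $\sum_{i=1}^n [G:G_i] = |G|$ with all terms $\geq 1$ and at least one term equal to $|G|$ forces $n = 1$ (all other terms would have to be $0$). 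So $G_1 = 1$ is core-free trivially, but this tells us a minimal faithful representation is just the regular representation, i.e.\ $G$ has no faithful transitive representation of smaller degree, i.e.\ \emph{every} subgroup $H < G$ with $H \neq 1$ has $core_G(H) \neq 1$. This is the condition that every nontrivial subgroup contains a nontrivial normal subgroup; in particular $G$ has a unique minimal normal subgroup $N$ (if $N_1 \neq N_2$ were two distinct minimal normal subgroups, then $N_1 \cap N_2 = 1$ would need $core_G(N_1 \cap N_2) \ne 1$... wait $N_1 \cap N_2 = 1$ is not a nontrivial subgroup, so this needs care — rather, pick $H = N_1$, a nontrivial subgroup, fine; the point is rather that every nontrivial \emph{subgroup} contains a minimal normal one, forcing the minimal normal subgroup to be unique, as two would fail to both be contained in a common cyclic subgroup). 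Then I would invoke the classification of finite groups in which every nontrivial subgroup has nontrivial core — equivalently groups with a unique minimal normal subgroup that is contained in every nontrivial subgroup — these are exactly the cyclic $p$-groups, the generalized quaternion groups, and (handling the $p$-group-with-noncyclic-center case separately) $V_4$; the decisive classical fact is that a $p$-group in which every subgroup of order $p$ is the unique one is cyclic or generalized quaternion, and a group all of whose Sylow subgroups have this property, together with $core$-essentiality, must itself be a $p$-group or $V_4$.

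\textbf{Main obstacle.} The hard part will be the converse of the equivalence, specifically ruling out non-$p$-groups and, within $p$-groups, identifying exactly the cyclic and generalized quaternion cases: this requires either quoting the classical theorem on $p$-groups with a unique subgroup of order $p$, or reproving enough of it. The refinement $cr(G) < 1.2 \Rightarrow cr(G) = 1$ should then follow by the minimal-counterexample argument sketched above, where the arithmetic of $\sum [G:G_i]/|G|$ over the small possible configurations (using that proper subgroups are already classified by the induction hypothesis and that $\mathbb{Z}_6$ with $cr = 6/5$ is the extremal case) pins down the bound $1.2$; I expect checking that no configuration yields a ratio strictly between $1$ and $1.2$ to reduce to a handful of explicit small cases (e.g.\ $\mathbb{Z}_p \times \mathbb{Z}_q$, $S_3$, $D_4$, $\mathbb{Z}_4$) rather than a general argument.
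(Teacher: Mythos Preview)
Your forward direction (types (a), (b), (c) are incompressible) matches the paper's exactly: use the abelian formula for $\mathbb{Z}_{p^k}$ and $V_4$, and the unique-minimal-subgroup argument for $Q_{2^n}$.

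The converse, however, has a genuine gap. You propose to argue that $\mu(G)=|G|$ implies every nontrivial subgroup has nontrivial core, and then that this forces a unique minimal normal subgroup. The first implication is fine (if $H\neq 1$ is core-free then $\{H\}$ is already faithful of degree $[G:H]<|G|$), but the second is false: $V_4$ itself has three minimal normal subgroups, yet every nontrivial subgroup has nontrivial core. You noticed the difficulty (``wait\ldots this needs care'') but the proposed fix does not recover the claim. More seriously, the condition ``every nontrivial subgroup has nontrivial core'' is strictly \emph{weaker} than $\mu(G)=|G|$: the group $\mathbb{Z}_2^3$ satisfies it (every subgroup is normal) but has $\mu=6<8$. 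So this structural reduction does not characterize incompressibility, and the route through ``unique minimal normal subgroup plus the classical $p$-group classification'' cannot work as stated; the earlier claim that a minimal representation with $\mu(G)=|G|$ must have $n=1$ and $G_1=1$ is likewise false (again $V_4$). Your separate minimal-counterexample plan for the $1.2$-refinement is plausible in spirit but remains vague exactly where the content lies.

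The paper avoids all of this by proving the converse and the refinement in one stroke. The key observation is: if $H,K\leq G$ are nontrivial with $H\cap K=1$, then $\{H,K\}$ is faithful of degree $|G|\bigl(\tfrac{1}{|H|}+\tfrac{1}{|K|}\bigr)$; if $\max(|H|,|K|)\geq 3$ this is at most $\tfrac{5}{6}|G|$, forcing $cr(G)\geq 1.2$. Hence $cr(G)<1.2$ implies any two trivially-intersecting nontrivial subgroups both have order $2$. In particular no two elements of distinct prime orders coexist, so $G$ is a $p$-group. For odd $p$ the unique-subgroup-of-order-$p$ condition forces $G$ cyclic (Zassenhaus); for $p=2$, either an element of order $4$ exists (then the involution is unique, so $G$ is cyclic or generalized quaternion), or $G\cong\mathbb{Z}_2^n$, where $cr(G)=2^n/(2n)<1.2$ forces $n\leq 2$. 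Taking $cr(G)=1<1.2$ then gives the converse of the equivalence for free, with no induction and no appeal to a core-condition characterization.
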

\begin{proof}
We begin by showing that groups of types $(a)$, $(b)$ or $(c)$ are incompressible.
If $G$ is of type $(c)$ then by the formula for the function $\mu$ for abelian groups
given in Theorem \ref{theorem_mu_for_abelian_groups} we have $\mu(G)=2+2=4=|G|$
and thus $G$ is incompressible.
Assume now that $G$ is either of type $(a)$ or of type $(b)$.
Then $G$ has a unique minimal subgroup $H$.
The subgroup $H$ must be normal in $G$.
Let $R=\{G_1,\dotsc,G_m\}$ be a faithful representation of $G$.
Assume, for the sake of contradiction, that for any $1 \leq i \leq m$ we have $G_i \neq 1$.
Then, for any $1 \leq i \leq m$, we have $H \leq G_i$.
Thus $H \leq \cap_{i=1}^m G_i$.
But $H$ is normal in $G$ and therefore $1 \neq H \leq core_G(\cap_{i=1}^m G_i)$
in contradiction to the faithfulness of the representation $R$.
Therefore, there is some $1 \leq i_0 \leq m$ such that $G_{i_0} = 1$.
Thus $\mu_G(R) = \sum_{i=1}^m [G:G_i] \geq [G:G_{i_0}] = [G:1] = |G|$.
Finally, since $R$ is an arbitrary faithful representation of $G$ we get $\mu(G)=|G|$ and
thus $G$ is incompressible.\\
To complete the proof we need to show that if $cr(G) < 1.2$ then $G$ is of one of the types
$(a)$, $(b)$ or $(c)$. Assume that $cr(G) < 1.2$.
We first show that if $H$ and $K$ are nontrivial subgroups of $G$ satisfying
$H \cap K = 1$, then both $H$ and $K$ are of order 2.
Assume for the sake of contradiction that $|H| \geq 3$.
The representation $R=\{H,K\}$ of $G$ is faithful because $H \cap K=1$.
Thus we have $\mu(G) \leq \mu_G(R) = [G:H]+[G:K] = |G|(1/|H|+1/|K|) \leq
|G|(1/3+1/2)=(5/6)|G|$. Therefore, $cr(G) \geq 1.2$, contradicting the assumption that $cr(G) < 1.2$.
Thus any two nontrivial subgroups of $G$ intersecting trivially must be both of order 2.
In particular, there cannot be two elements in $G$ of distinct prime orders and therefore
$G$ is a $p$-group for some prime $p$. If $p$ is an odd prime, then $G$ is a group of odd-order which
has a unique subgroup of order $p$ and thus $G$ is of type $(a)$ (see \cite{zassenhaus49}, p. 118. Theorem 15).
If $p=2$, that is, $G$ is a 2-group, we consider two cases:
If there is an element $g$ in $G$ of order 4 then $g^2$ must be the unique element of order 2 in $G$ and thus
we conclude that $G$ is either of type $(a)$ or of type $(b)$ (again, by \cite{zassenhaus49}, p. 118. Theorem 15).
If, on the other hand, no element of $G$ is of order 4 then $G$ in an elementary abelian 2-group.
That is $G = \mathbb{Z}_2^n$ for some $n \geq 1$ and thus, by the formula for the function $\mu$ for
abelian groups given in Theorem \ref{theorem_mu_for_abelian_groups} we have $\mu(G) = 2n$. But $|G|=2^n$.
Thus $1.2 > cr(G) = 2^n/(2n)$ and thus either $n=1$ or $n=2$. That is, $G$ is either of type $(a)$
or of type $(c)$.
\end{proof}

Note that if we further assume that $G$ is of odd order,
then, by similar reasoning, we get $cr(G) < 1.5$ if and only if $cr(G)=1$.

We believe it would be interesting to continue the study of the compression ratio by answering questions similar to the
following:

Is there a function $f:\mathbb{R} \rightarrow \mathbb{R}$
such that whenever $cr(G) \leq r$ there must be a solvable subgroup of $G$ of index $\leq f(r)$?

\section{Appendix}
\subsection{Group Theory} \label{subsection_group_theory}

\begin{lemma} \label{lemma_subgroups_of_direct_product_with_coprime_orders}
Let G and H be finite groups such that $gcd(|G|, |H|)=1$ and let $K \leq G \times H$. Then for some $G' \leq G$, $H' \leq H$ we have $K = G' \times H'$.
\end{lemma}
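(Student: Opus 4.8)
The plan is to use the two coordinate projections together with a divisibility count. Let $\pi_G : G \times H \to G$ and $\pi_H : G \times H \to H$ be the projection homomorphisms, and set $G' := \pi_G(K) \le G$ and $H' := \pi_H(K) \le H$. By construction every element of $K$ has its first coordinate in $G'$ and its second coordinate in $H'$, so $K \le G' \times H'$; it remains to prove the reverse inclusion, and since everything is finite it suffices to show $|K| = |G'|\,|H'|$.

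First I would observe that the restriction $\pi_G|_K : K \to G'$ is surjective by the definition of $G'$, so $|G'|$ divides $|K|$; likewise $|H'|$ divides $|K|$. Next, since $G' \le G$ and $H' \le H$, Lagrange's theorem gives $|G'| \mid |G|$ and $|H'| \mid |H|$, and the hypothesis $\gcd(|G|,|H|) = 1$ forces $\gcd(|G'|,|H'|) = 1$. Two coprime integers that both divide $|K|$ have their product dividing $|K|$, so $|G'|\,|H'| \mid |K|$. On the other hand $K \le G' \times H'$ gives $|K| \le |G'|\,|H'|$. Combining this with the divisibility just established yields $|K| = |G'|\,|H'| = |G' \times H'|$, and hence $K = G' \times H'$, as desired.

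There is no serious obstacle here beyond being careful that \emph{both} $|G'|$ and $|H'|$ divide $|K|$ (which is exactly where surjectivity of the projections onto $G'$ and $H'$ is used) and then invoking coprimality to pass from ``both divide'' to ``product divides''. An alternative route would go through Goursat's lemma: $K \cap (G \times 1)$ and $K \cap (1 \times H)$ project to normal subgroups $N_G \trianglelefteq G'$ and $N_H \trianglelefteq H'$ with $G'/N_G \cong H'/N_H$, and since this common quotient has order dividing both $|G|$ and $|H|$ it must be trivial, giving $G' \times 1 \le K$ and $1 \times H' \le K$ and hence $G' \times H' \le K$; but the counting argument above is shorter and avoids quoting Goursat.
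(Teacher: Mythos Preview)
Your proof is correct. Both you and the paper begin with the same setup---take $G' = \pi_G(K)$, $H' = \pi_H(K)$, and note the easy inclusion $K \leq G' \times H'$---but diverge on the reverse inclusion. The paper argues elementwise: given $(g,h) \in G' \times H'$, it picks witnesses $(g,h'),(g',h) \in K$ and uses the Chinese Remainder Theorem to find exponents $e_1,e_2$ with $e_1 \equiv 1 \pmod{|G|}$, $e_1 \equiv 0 \pmod{|H|}$ (and vice versa for $e_2$), so that $(g,1)=(g,h')^{e_1}$ and $(1,h)=(g',h)^{e_2}$ both lie in $K$. Your argument instead counts: surjectivity of the two restricted projections forces both $|G'|$ and $|H'|$ to divide $|K|$, coprimality upgrades this to $|G'||H'| \mid |K|$, and the containment $K \leq G' \times H'$ closes the gap. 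Your route is a bit slicker and avoids element-chasing; the paper's route is more constructive in that it exhibits explicitly why each $(g,h)$ lies in $K$. Either is perfectly adequate for this lemma.
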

\begin{proof}
Let $G'=\pi_1(K)$, $H'=\pi_2(K)$ where $\pi_i$ is the projection of the $i^{th}$ coordinate. Obviously $K \subset G' \times H'$. For the reverse inclusion, let $(g,h) \in G' \times H'$.
Then there are $g' \in G$, $h' \in H$ such that $(g,h'), (g',h) \in K$. Since $gcd(|G|, |H|)=1$ and by the chinese remainder theorem,
there exist integers $e_1$, $e_2$ such that $e_1 \equiv 1 \pmod{|G|}$, $e_1 \equiv 0 \pmod{|H|}$, $e_2 \equiv 0 \pmod{|G|}$, $e_2 \equiv 1 \pmod{|H|}$.
Thus, $(g,1) = (g,h')^{e_1} \in K$ and $(1,h) = (g',h)^{e_2} \in K$ and so $(g,h) = (g,1)(1,h) \in K$.
\end{proof}

\begin{definition} \label{definition_G[m]}
If $G$ is abelian group and $m > 0$ is an integer, then $G[m] := \{x \in G \mid mx = 0\}$
\end{definition}

Note that if $p$ is prime then $G[p]$ is a vector space (over $\mathbb{Z}_p$).

\begin{definition} \label{definition_socle}
Let $G$ be a finite group. The the \emph{socle} of G, denoted $Soc(G)$,
is the subgroup of $G$ generated by the nontrivial minimal normal subgroups of $G$.
\end{definition}

\begin{lemma} \label{lemma_central_socle}
Let $G$ be a finite group for which $Soc(G) \leq Z(G)$.
Then $Soc(G) = \prod_{\substack{p \mid |Z(G)|\\\text{$p$ prime}}}Z(G)[p]$.
\end{lemma}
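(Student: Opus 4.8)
The plan is to prove that if $G$ is a finite group with $Soc(G) \leq Z(G)$, then $Soc(G) = \prod_{p \mid |Z(G)|} Z(G)[p]$. I would start by noting that since $Soc(G)$ is central, it is in particular abelian, and every subgroup of an abelian group is normal in that abelian group; moreover any subgroup of $Z(G)$ is normal in $G$. This is the key observation that links "minimal normal subgroup of $G$" with "minimal subgroup of the abelian group $Z(G)$" once we are inside the socle.

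First I would establish the inclusion $Soc(G) \subseteq \prod_p Z(G)[p]$. A minimal normal subgroup $N$ of $G$ that is contained in $Z(G)$ is a minimal subgroup of $Z(G)$ among nontrivial subgroups normal in $G$; since \emph{every} subgroup of $Z(G)$ is normal in $G$, $N$ is simply a minimal nontrivial subgroup of $Z(G)$, hence cyclic of prime order $p$ for some prime $p$ dividing $|Z(G)|$. Thus $N \leq Z(G)[p]$. Since $Soc(G)$ is generated by such $N$'s, $Soc(G) \leq \prod_{p \mid |Z(G)|} Z(G)[p]$ (using that the $Z(G)[p]$ for distinct $p$ generate their internal direct product inside the abelian group $Z(G)$).

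For the reverse inclusion $\prod_p Z(G)[p] \subseteq Soc(G)$, I would argue that each $Z(G)[p]$ is a direct product (equivalently, sum) of subgroups of order $p$: indeed $Z(G)[p]$ is an elementary abelian $p$-group, i.e.\ a vector space over $\mathbb{Z}_p$, so it decomposes as a direct sum of one-dimensional subspaces, each of which has order $p$. Each such order-$p$ subgroup lies in $Z(G)$, hence is normal in $G$, and being of prime order it is a minimal normal subgroup of $G$, so it is contained in $Soc(G)$. Therefore $Z(G)[p] \leq Soc(G)$ for every prime $p \mid |Z(G)|$, and taking the product over all such $p$ gives $\prod_p Z(G)[p] \leq Soc(G)$. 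Combining the two inclusions yields the claimed equality, and the product is genuinely direct because the $Z(G)[p]$ sit inside the abelian group $Z(G)$ and have pairwise coprime orders (their pairwise intersections are trivial).

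I do not anticipate a serious obstacle here; the only subtlety is making sure the notion of "minimal normal subgroup of $G$" collapses to "minimal subgroup of the abelian group $Z(G)$" under the hypothesis $Soc(G) \leq Z(G)$ — one must check that a subgroup of $Z(G)$ of prime order cannot properly contain a smaller $G$-normal subgroup (it cannot, since it has no proper nontrivial subgroups at all) and that it really is normal in $G$ (it is, as a subgroup of the center). Once that is pinned down, both inclusions are routine.
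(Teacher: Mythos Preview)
Your proof is correct and follows essentially the same approach as the paper: both arguments identify the minimal normal subgroups of $G$ (under the hypothesis $Soc(G)\le Z(G)$) with the cyclic central subgroups of prime order, and conclude that their join is exactly $\prod_{p}Z(G)[p]$. Your write-up is simply more explicit---you spell out why a minimal normal subgroup contained in $Z(G)$ must have prime order (because every subgroup of $Z(G)$ is normal in $G$), a step the paper asserts in a single clause.
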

\begin{proof}
On one hand, any cyclic central subgroup of prime order of $G$ is a minimal normal subgroup.
On the other hand, since $Soc(G) \leq Z(G)$, any minimal normal subgroup of $G$ must be central
and thus must be cyclic of prime order.
So $Soc(G)$ is the subgroup generated by all central elements of $G$ of prime order which is
$\prod_{\substack{p \mid |Z(G)|\\\text{$p$ prime}}}Z(G)[p]$ as claimed.
\end{proof}

\begin{lemma} \label{lemma_socle_of_direct_product}
Let $G$ and $H$ be finite groups. 
Then $Soc(G \times H) = Soc(G) \times Soc(H)$.
\end{lemma}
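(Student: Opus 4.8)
The plan is to prove the two inclusions $Soc(G) \times Soc(H) \leq Soc(G \times H)$ and $Soc(G \times H) \leq Soc(G) \times Soc(H)$ separately. For the first (easier) inclusion I would observe that whenever $N$ is a minimal normal subgroup of $G$, the subgroup $N \times 1$ is a minimal normal subgroup of $G \times H$: it is plainly normal, and any nontrivial normal subgroup of $G \times H$ contained in it has the form $M \times 1$ with $1 \neq M \unlhd G$ and $M \leq N$, so by minimality of $N$ it equals $N \times 1$. Hence $Soc(G) \times 1 \leq Soc(G \times H)$, and symmetrically $1 \times Soc(H) \leq Soc(G \times H)$; since $Soc(G) \times Soc(H) = (Soc(G) \times 1)(1 \times Soc(H))$, the first inclusion follows.

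For the reverse inclusion I would show that every minimal normal subgroup $N$ of $G \times H$ is contained in $\pi_1(N) \times \pi_2(N)$ with $\pi_1(N) \leq Soc(G)$ and $\pi_2(N) \leq Soc(H)$, where $\pi_1, \pi_2$ are the coordinate projections; since $Soc(G \times H)$ is generated by its minimal normal subgroups and $Soc(G) \times Soc(H)$ is a subgroup, this yields the inclusion. I would distinguish cases according to whether $N$ meets the factors nontrivially. If $N \cap (G \times 1) \neq 1$, then $N \cap (G \times 1)$ is a nontrivial normal subgroup of $G \times H$ inside $N$, so by minimality $N = N \cap (G \times 1) \leq G \times 1$; writing $N = N_0 \times 1$ with $N_0 \unlhd G$, the same argument as above run backwards shows $N_0$ is a minimal normal subgroup of $G$, whence $N \leq Soc(G) \times 1 \leq Soc(G) \times Soc(H)$. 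The case $N \cap (1 \times H) \neq 1$ is symmetric.

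The remaining case, $N \cap (G \times 1) = N \cap (1 \times H) = 1$, is the heart of the matter. Here I would run the standard commutator trick: because $N$ and $G \times 1$ are both normal in $G \times H$, the commutator $[N, G \times 1]$ lies in $N \cap (G \times 1) = 1$, so $N$ centralizes $G \times 1$; likewise it centralizes $1 \times H$, and therefore $N \leq Z(G \times H) = Z(G) \times Z(H)$. Thus $N$ is abelian, and choosing any $1 \neq x \in N$ and a prime $p$ dividing $|x|$, the element $x^{|x|/p}$ is a nontrivial central element of $G \times H$ of order $p$, so $\langle x^{|x|/p}\rangle$ is a nontrivial normal subgroup of $G \times H$ contained in $N$ — forcing $N$ to be cyclic of order $p$ by minimality. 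Then $\pi_1(N)$ is a central subgroup of $G$ that is trivial or of order $p$, and if nontrivial it is a central subgroup of prime order, hence a minimal normal subgroup of $G$; in either case $\pi_1(N) \leq Soc(G)$, similarly $\pi_2(N) \leq Soc(H)$, and $N \leq \pi_1(N) \times \pi_2(N) \leq Soc(G) \times Soc(H)$, completing the argument. I expect the only genuinely delicate step to be this last case — the realization, via the commutator computation, that a minimal normal subgroup meeting both direct factors trivially is automatically central; once that is in hand, everything collapses to the elementary fact that central subgroups of prime order lie in the socle.
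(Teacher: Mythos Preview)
Your proof is correct, but for the reverse inclusion you take a genuinely different route from the paper. The paper argues \emph{uniformly}: given a minimal normal subgroup $N \leq G \times H$, it shows directly that each projection $\pi_i(N)$ is a minimal normal subgroup of the corresponding factor (or trivial). Normality of $\pi_1(N)$ in $G$ is checked by conjugation, and minimality is obtained by observing that if $1 \neq N_1 \lneq \pi_1(N)$ were normal in $G$, then $(N_1 \times \pi_2(N)) \cap N$ would be a nontrivial normal subgroup of $G \times H$ strictly inside $N$ --- a contradiction. No case split and no commutator trick are used.

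Your argument instead trichotomizes according to whether $N$ meets $G \times 1$ or $1 \times H$, and in the ``diagonal'' case invokes $[N, G \times 1] \leq N \cap (G \times 1) = 1$ to force $N$ central, hence cyclic of prime order. This is a perfectly standard and clean argument; it has the mild advantage of exhibiting the structural reason why the diagonal case is special (such $N$ are central), at the cost of the extra case analysis. The paper's approach is shorter and yields the slightly sharper byproduct that $\pi_1(N)$ and $\pi_2(N)$ are themselves minimal normal, not merely contained in the socle. Either approach suffices for the lemma as stated.
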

\begin{proof}
On one hand, any minimal normal subgroup of $G$ or $H$ is a minimal normal subgroup of $G \times H$ and thus $Soc(G) \times Soc(H) \leq Soc(G \times H)$.
We proceed to show the reverse inclusion:
Let $1 \neq N \unlhd G \times H$ be a minimal normal subgroup of $G \times H$.
We need to show that $N \leq Soc(G) \times Soc(H)$.
Since $N \leq \pi_1(N) \times \pi_2(N)$, it is sufficient to prove that $\pi_1(N) \leq Soc(G)$ and $\pi_2(N) \leq Soc(H)$.
To do so, we will show that $\pi_1(N)$ and $\pi_2(N)$ are minimal normal subgroups of $Soc(G)$ and $Soc(H)$ respectively.
First, $\pi_1(N)$ is a normal subgroup of $G$ because for any $g \in \pi_1(N)$ there exists some $h \in H$ such that
$(g,h) \in N$ and therefore, for any $g_0 \in G$ it holds that $(g_0 g g_0^{-1},h) = (g_0,1)(g,h)(g_0,1)^{-1} \in N$
and therefore $g_0 g g_0^{-1} \in \pi_1(N)$. Similarly, $\pi_2(N)$ is a normal subgroup of $H$.
As for minimality, assume, for the sake of contradiction, that there exists $1 \neq N_1 < \pi_1(N)$ such that $N_1 \unlhd G$.
Then surely $(N_1 \times \pi_2(N)) \cap N \leq N$ and $(N_1 \times \pi_2(N)) \cap N \unlhd G \times H$ as an intersection of normal subgroups.
But since $1 \neq N_1 \leq \pi_1(N)$, there exists some $1 \neq g \in N_1$ and $h \in \pi_2(N)$ such that $(g,h) \in N$.
Therefore $(N_1 \times \pi_2(N)) \cap N \neq 1$ in contradiction with the minimality of $N$.
So $\pi_1(N)$ is a minimal normal subgroup of $G$. Similarly, $\pi_2(N)$ is a minimal normal subgroup of $H$.
This completes the proof.
\end{proof}

\begin{definition} \label{definition_g_function}
If G is abelian p-group and $t \geq 0$ is an integer then g(G, t) := the number of factors of order $\geq p^t$ in the primary decomposition of $G$.
\end{definition}

\begin{lemma} \label{lemma_g_function}
Let G be a finite abelian p-group and $t \geq 0$ an integer. Then $g(G, t) = log_p([G[p^t]:G[p^{t-1}]])$.
\end{lemma}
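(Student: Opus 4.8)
The plan is to compute both sides of the claimed identity directly from the primary decomposition $G \cong \prod_{i=1}^n \mathbb{Z}_{p^{d_i}}$. The right-hand side is the logarithm of the index of two subgroups of the form $G[p^s]$, each of which is completely transparent once one understands how multiplication by $p^s$ acts coordinatewise; the left-hand side is a simple count of the exponents $d_i$. So the whole argument is a short bookkeeping computation with the function $s \mapsto \min(d_i,s)$.

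First I would record that $G[p^s] = \prod_{i=1}^n \mathbb{Z}_{p^{d_i}}[p^s]$, since a tuple is killed by $p^s$ exactly when each of its coordinates is, and that inside a single cyclic factor $\mathbb{Z}_{p^{d_i}}[p^s] = p^{\max(d_i - s,\,0)}\mathbb{Z}_{p^{d_i}}$ is cyclic of order $p^{\min(d_i,s)}$ (this is also a special case of Lemma \ref{lemma_subgroups_of_abelian_group} in the appendix). Hence $|G[p^s]| = p^{\sum_{i=1}^n \min(d_i,s)}$. Applying this with $s = t$ and with $s = t-1$ (reading the statement for $t \geq 1$, so that $p^{t-1}$ is a positive integer and $G[p^{t-1}]$ is defined as in Definition \ref{definition_G[m]}), and using that $G[p^{t-1}] \leq G[p^t]$, I obtain
\[
[G[p^t] : G[p^{t-1}]] \;=\; p^{\sum_{i=1}^n \left(\min(d_i,t)\,-\,\min(d_i,t-1)\right)}.
\]

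To finish, I would observe that for each $i$ the quantity $\min(d_i,t) - \min(d_i,t-1)$ equals $1$ when $d_i \geq t$ and $0$ when $d_i \leq t-1$, so the exponent on the right is exactly the number of indices $i$ with $d_i \geq t$, i.e.\ the number of cyclic factors of $G$ of order $p^{d_i} \geq p^t$; by Definition \ref{definition_g_function} this number is $g(G,t)$. Taking $\log_p$ of both sides gives $g(G,t) = \log_p([G[p^t]:G[p^{t-1}]])$, as desired. I do not expect any genuine obstacle: the only points requiring care are the telescoping of the exponents and the mild convention issue at $t = 0$, where $p^{t-1}$ is not a positive integer and so $G[p^{t-1}]$ is, strictly speaking, undefined — hence the identity should be understood for $t \geq 1$.
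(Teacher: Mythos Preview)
Your proof is correct and follows essentially the same approach as the paper: both reduce to the cyclic factors and combine. The paper phrases this as ``both sides are additive over direct products and agree on cyclic $p$-groups, hence equal by induction,'' whereas you carry out the same two ingredients explicitly via the formula $|G[p^s]| = p^{\sum_i \min(d_i,s)}$ and the observation that $\min(d_i,t)-\min(d_i,t-1)$ is the indicator of $d_i \geq t$; the content is identical. Your remark about the $t=0$ boundary case is also apt.
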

\begin{proof}
Denote $h(G, t) := log_p([G[p^t]:G[p^{t-1}]])$. Fix some integer $t \geq 0$.
On one hand, for a finite cyclic p-group K, we easily have $g(K,t) = h(K, t)$.
On the other hand, if K and H are finite abelian p-groups we have $g(K \times H, t) = g(K, t) + g(H, t)$ and $h(K \times H, t) = h(K, t) + h(H, t)$.
Thus the equality between g and h is proved by induction.
\end{proof}

\begin{lemma} \label{lemma_subgroups_of_abelian_group}
If $H \leq G = \prod_{i=1}^n \mathbb{Z}_{p^{d_i}}$ for some prime p and integers $d_1,\dotsc,d_n$ then there exist integers $c_1 \leq d_1,\dotsc,c_n \leq d_n$ such that $H \cong \prod_{i=1}^n \mathbb{Z}_{p^{c_i}}$
\end{lemma}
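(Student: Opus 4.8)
The plan is to compare the torsion layers $G[p^t]/G[p^{t-1}]$ of $G$ with those of $H$, using the counting function $g$ of Definition \ref{definition_g_function} and its description in Lemma \ref{lemma_g_function}. Concretely, I would first show that $g(H,t)\le g(G,t)$ for every integer $t\ge 1$, and then convert this family of inequalities into the claimed existence of exponents $c_i$ by an elementary interlacing argument on primary decompositions.

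For the inequality: since $H\le G$ we have $H[p^t]\le G[p^t]$ for all $t$, and also $H[p^{t-1}]=H[p^t]\cap G[p^{t-1}]$, because an element of $H$ killed by $p^{t-1}$ is killed by $p^t$, while an element of $H[p^t]$ lying in $G[p^{t-1}]$ already lies in $H[p^{t-1}]$. The second isomorphism theorem then gives
\[
H[p^t]/H[p^{t-1}] \;=\; H[p^t]\big/\bigl(H[p^t]\cap G[p^{t-1}]\bigr)\;\cong\;\bigl(H[p^t]+G[p^{t-1}]\bigr)\big/G[p^{t-1}],
\]
and since $H[p^t]+G[p^{t-1}]\le G[p^t]$ (both summands being contained in $G[p^t]$), the right-hand side is a subgroup of $G[p^t]/G[p^{t-1}]$. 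Comparing orders and taking $\log_p$, Lemma \ref{lemma_g_function} yields $g(H,t)\le g(G,t)$ for all $t\ge 1$.

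To finish, reorder the factors of $G$ so that $d_1\ge d_2\ge\cdots\ge d_n$ (the $c_i$ produced below can be permuted back, since $\prod_i\mathbb{Z}_{p^{c_i}}$ only depends on the multiset of the $c_i$). Let $e_1\ge e_2\ge\cdots\ge e_k>0$ be the exponents in the primary decomposition of $H$, so that $g(H,t)=\#\{j:e_j\ge t\}$ and $g(G,t)=\#\{i:d_i\ge t\}$. Taking $t=1$ gives $k=g(H,1)\le g(G,1)\le n$. If $e_j>d_j$ for some $j\le k$, then setting $t=e_j$ we get $g(H,t)\ge j$ (as $e_1,\dots,e_j\ge t$) but $g(G,t)\le j-1$ (as $d_j\ge d_{j+1}\ge\cdots\ge d_n$ are all $<t$), contradicting $g(H,t)\le g(G,t)$. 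Hence $e_j\le d_j$ for all $j\le k$; putting $c_j=e_j$ for $j\le k$ and $c_j=0$ for $k<j\le n$ gives $c_i\le d_i$ for all $i$ and $H\cong\prod_{i=1}^n\mathbb{Z}_{p^{c_i}}$, since $\mathbb{Z}_{p^0}$ is trivial.

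I expect the inequality $g(H,t)\le g(G,t)$ to be the only step with real content; the rest is bookkeeping. The point worth flagging is that the tempting shortcut---inducting on $n$ by projecting off the last coordinate of $G$---does not work, since a subgroup need not decompose compatibly with such a projection (e.g.\ $\langle(1,1)\rangle\le\mathbb{Z}_{p^2}\times\mathbb{Z}_p$ is cyclic of order $p^2$ although both its projection to and its intersection with the obvious factors have exponent $p$), so one genuinely has to handle all torsion layers at once.
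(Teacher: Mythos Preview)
Your proof is correct and follows essentially the same route as the paper: both establish $g(H,t)\le g(G,t)$ for all $t$ via the second isomorphism theorem applied to $H[p^t]/(H[p^t]\cap G[p^{t-1}])$, and both treat the passage from these inequalities to the existence of the $c_i$ as the routine part. The only difference is that the paper simply asserts this last passage as ``an equivalent formulation'' without further comment, whereas you have written out the interlacing argument explicitly; your version is more complete but not different in spirit.
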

\begin{proof}
An equivalent formulation of the proposition is: for any $t \geq 0$, $g(H, t) \leq g(G, t)$. By Lemma \ref{lemma_g_function}, this is equivalent to $[H[p^t]:H[p^{t-1}]] \leq [G[p^t]:G[p^{t-1}]]$. To prove this we note that:
$H[p^t]/H[p^{t-1}] = H[p^t]/(H[p^t] \cap G[p^{t-1}]) \cong (H[p^t]G[p^{t-1}])/G[p^{t-1}] \leq G[p^t]/G[p^{t-1}]$, where the isomorphism follows from the second isomorphism theorem. This completes the proof.
\end{proof}

\subsection{Linear Algebra}
Consider an $n \times n$ matrix $A$, a list of row indices ${\bf r}=(r_1,\dotsc,r_k)$
and a list of column indices ${\bf c}=(c_1,\dotsc,c_k)$.
We define two submatrices of $A$: a $k \times k$ submatrix $S(A; {\bf r}, {\bf c})$ and
an $(n-k) \times (n-k)$ submatrix $S'(A; {\bf r}, {\bf c})$.
The submatrix $S(A; {\bf r}, {\bf c})$ is obtained by keeping the entries of the intersection of
any row belonging to the list ${\bf r}$ and any column belonging to the list ${\bf c}$.
The submatrix $S'(A; {\bf r}, {\bf c})$ is obtained by keeping the entries of the intersection of
any row \emph{not} belonging to the list ${\bf r}$ and any column \emph{not} belonging to the list ${\bf c}$.
To simplify the formula given in the next Lemma, we let the index of the first row and the first column be 0.

\begin{lemma}[Laplace's Determinant Expansion Theorem] \label{lemma_laplace_expansion} 
Let $A$ be an $n \times n$ matrix. Let ${\bf c} = (c_1, \dotsc, c_k)$ be a list of k column indices, where $1 \leq k < n$ and $0 \leq c_1 < c_2 < \dotsm < c_k < n$.
Then, the determinant of $A$ is given by
$det(A) = (-1)^{|{\bf c}|} \sum_{{\bf r}} (-1)^{|{\bf r}|} det S(A; {\bf r}, {\bf c}) det S'(A; {\bf r}, {\bf c})$ \\
where $|{\bf c}| = c_1 + \dotsc + c_k$, $|{\bf r}| = r_1 + \dotsc + r_k$ and the summation is over all $k$-tuples ${\bf r} = (r_1,\dotsc,r_k)$
for which $0 \leq r_1 < \dotsm < r_k < n$.
\end{lemma}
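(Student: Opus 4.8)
The plan is to prove the formula directly from the Leibniz expansion by regrouping permutations. Write
\[
\det(A) = \sum_{\sigma \in S_n} \mathrm{sgn}(\sigma)\, \prod_{i=0}^{n-1} A_{i,\sigma(i)},
\]
indexing rows and columns from $0$ to match the statement. For each $\sigma \in S_n$ let ${\bf r}(\sigma) = (r_1 < \dots < r_k)$ be the increasing list of those rows $i$ with $\sigma(i) \in \{c_1,\dots,c_k\}$. Then $\sigma$ restricts to a bijection $\sigma|_{\bf r}\colon {\bf r}\to{\bf c}$ and a bijection $\sigma|_{{\bf r}^c}\colon {\bf r}^c\to{\bf c}^c$ between the complementary index sets. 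The first, routine, step is the bookkeeping observation that $\sigma \longmapsto ({\bf r}(\sigma),\, \sigma|_{\bf r},\, \sigma|_{{\bf r}^c})$ is a bijection from $S_n$ onto the set of all triples consisting of an increasing $k$-list ${\bf r}$ of rows, a bijection ${\bf r}\to{\bf c}$, and a bijection ${\bf r}^c\to{\bf c}^c$; its inverse just glues the two bijections together.

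Next I would factor the monomial attached to a $\sigma$ with ${\bf r}(\sigma)={\bf r}$ as $\prod_{i} A_{i,\sigma(i)} = \bigl(\prod_{i\in{\bf r}} A_{i,\sigma(i)}\bigr)\bigl(\prod_{i\notin{\bf r}} A_{i,\sigma(i)}\bigr)$. Summing the first factor over all bijections ${\bf r}\to{\bf c}$, weighted by the sign of the corresponding permutation of $\{1,\dots,k\}$ obtained by listing ${\bf r}$ and ${\bf c}$ in increasing order, reproduces precisely the Leibniz expansion of $\det S(A;{\bf r},{\bf c})$; likewise the second factor reproduces $\det S'(A;{\bf r},{\bf c})$. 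Hence, after regrouping, everything reduces to a comparison of signs: one needs
\[
\mathrm{sgn}(\sigma) = (-1)^{|{\bf r}|+|{\bf c}|}\, \mathrm{sgn}\bigl(\overline{\sigma|_{\bf r}}\bigr)\, \mathrm{sgn}\bigl(\overline{\sigma|_{{\bf r}^c}}\bigr),
\]
where the bars denote the induced permutations of $\{1,\dots,k\}$ and $\{1,\dots,n-k\}$ after relabelling the index sets in increasing order. Granting this, substituting into the regrouped sum and pulling the constant factor $(-1)^{|{\bf c}|}$ (the same for every term, since ${\bf c}$ is fixed) out front yields exactly the asserted identity.

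The heart of the proof --- and the step I expect to be the main obstacle --- is the sign identity above, which I would establish in two moves. First, replacing $\sigma$ by $\sigma\circ\tau$ for a transposition $\tau$ of two elements of ${\bf r}$ (resp.\ of ${\bf r}^c$) leaves ${\bf r}(\sigma)$ unchanged and multiplies both sides by $-1$; since such transpositions generate, it suffices to verify the identity for the single permutation $\sigma_0$ that sends $r_j\mapsto c_j$ order-preservingly and maps ${\bf r}^c$ onto ${\bf c}^c$ order-preservingly, for which both bar-permutations are the identity and the right-hand side is just $(-1)^{|{\bf r}|+|{\bf c}|}$. Second, I would compute $\mathrm{sgn}(\sigma_0)$ by writing $\sigma_0 = w_{\bf c}\circ w_{\bf r}^{-1}$, where $w_{\bf r}$ is the permutation whose one-line notation lists ${\bf r}$ first (increasing) followed by ${\bf r}^c$ (increasing), and $w_{\bf c}$ is defined analogously. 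A direct count shows $w_{\bf r}$ has $|{\bf r}|-\binom{k}{2}$ inversions, so $\mathrm{sgn}(w_{\bf r}) = (-1)^{|{\bf r}|-\binom{k}{2}}$ and similarly $\mathrm{sgn}(w_{\bf c}) = (-1)^{|{\bf c}|-\binom{k}{2}}$; hence $\mathrm{sgn}(\sigma_0) = \mathrm{sgn}(w_{\bf c})\mathrm{sgn}(w_{\bf r}) = (-1)^{|{\bf r}|+|{\bf c}|}$, as needed. I might add the remark that the statement can alternatively be deduced from the characterization of $\det$ as the unique alternating multilinear form on the columns with value $1$ at $I$, by checking that the right-hand side is multilinear and alternating in the columns of $A$ and equals $1$ at $A=I$; but the combinatorial route is what makes the sign $(-1)^{|{\bf r}|+|{\bf c}|}$ transparent.
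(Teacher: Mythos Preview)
Your argument is correct: the bijection between $S_n$ and the set of triples $({\bf r},\alpha,\beta)$ is sound, the factoring of the Leibniz monomial is immediate, and the sign computation via $\sigma_0 = w_{\bf c}\circ w_{\bf r}^{-1}$ together with the inversion count $\mathrm{inv}(w_{\bf r}) = |{\bf r}| - \binom{k}{2}$ goes through cleanly. The reduction step (composing on the right by transpositions within ${\bf r}$ or within ${\bf r}^c$) is also valid, since such a transposition preserves ${\bf r}(\sigma)$ and flips both sides of the sign identity.

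As to comparison with the paper: the paper does not prove this lemma at all. Its entire proof is a pointer to an external reference (\cite{laplace_expansion}), so your write-up is strictly more than what the paper supplies. If anything, your combinatorial route is the standard one and makes the sign $(-1)^{|{\bf r}|+|{\bf c}|}$ explicit, whereas the alternative characterization-of-$\det$ argument you mention at the end would be shorter but would obscure exactly that point. Either would be acceptable here since the lemma is only used qualitatively (in Lemma~\ref{lemma_invertible_matrix_faithful_decomposition} one just needs that some term in the expansion is nonzero).
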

\begin{proof}
See \cite{laplace_expansion}.
\end{proof}

The proof of the following lemma is due to Robert Israel \cite{israel11}.

\begin{lemma} \label{lemma_invertible_matrix_faithful_decomposition}
Let $M$ be an $n \times n$ invertible matrix and let $1 \leq m \leq n-1$ be an integer.
Then it is possible to permuate the rows of $M$ to obtain a matrix $M'$ of the form
$
M'=\left[
\begin{array}{cc}
A & B \\
C & D
\end{array}\right]
$
where $A$ is an $m \times m$ matrix and $D$ is an $(n-m) \times (n-m)$ matrix and both $A$ and $D$ are invertible.
\end{lemma}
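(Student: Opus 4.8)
The plan is to apply Laplace's Determinant Expansion Theorem (Lemma~\ref{lemma_laplace_expansion}) to $M$ along its first $m$ columns and then exploit the hypothesis that $\det M \neq 0$ to extract a single nonvanishing term of the expansion.

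First I would set $\mathbf{c} = (0, 1, \dots, m-1)$, the list of the first $m$ column indices. Laplace's theorem then yields
\[
\det M = (-1)^{|\mathbf{c}|} \sum_{\mathbf{r}} (-1)^{|\mathbf{r}|}\, \det S(M; \mathbf{r}, \mathbf{c})\, \det S'(M; \mathbf{r}, \mathbf{c}),
\]
where the sum runs over all $m$-tuples $\mathbf{r} = (r_1, \dots, r_m)$ with $0 \le r_1 < \dots < r_m < n$. Since $M$ is invertible, $\det M \neq 0$, so at least one summand is nonzero; fix such an $\mathbf{r}$. For this choice both $\det S(M; \mathbf{r}, \mathbf{c}) \neq 0$ and $\det S'(M; \mathbf{r}, \mathbf{c}) \neq 0$, i.e.\ the $m \times m$ submatrix of $M$ formed by the rows in $\mathbf{r}$ and the first $m$ columns is invertible, and the complementary $(n-m) \times (n-m)$ submatrix formed by the remaining rows and the last $n-m$ columns is invertible.

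Next I would build the row permutation explicitly: let $\sigma$ be the permutation of $\{0, \dots, n-1\}$ that lists $r_1, \dots, r_m$ first (in increasing order), followed by the elements of $\{0,\dots,n-1\} \setminus \{r_1,\dots,r_m\}$ (also in increasing order), and let $M'$ be the matrix obtained from $M$ by reordering its rows according to $\sigma$. Writing $M'$ in block form $M' = \left[\begin{array}{cc} A & B \\ C & D \end{array}\right]$ with $A$ of size $m \times m$ and $D$ of size $(n-m) \times (n-m)$, the block $A$ consists precisely of the entries of $M$ in the rows $\mathbf{r}$ and the columns $0, \dots, m-1$, kept in their original relative order — that is, $A = S(M; \mathbf{r}, \mathbf{c})$ — and likewise $D = S'(M; \mathbf{r}, \mathbf{c})$. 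Hence both $A$ and $D$ are invertible, which is exactly what we need.

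The only point requiring care — and the main (minor) obstacle — is to match the block decomposition of $M'$ with the submatrices $S$ and $S'$ of Lemma~\ref{lemma_laplace_expansion} exactly: one must check that sorting the chosen rows and the leftover rows each into increasing order does not change which column entries fall into which block, so that the top-left $m\times m$ block of $M'$ is literally $S(M;\mathbf{r},\mathbf{c})$ and the bottom-right $(n-m)\times(n-m)$ block is literally $S'(M;\mathbf{r},\mathbf{c})$. Any sign picked up by $\det M'$ under this row permutation is irrelevant, since only nonvanishing of the two block determinants is needed.
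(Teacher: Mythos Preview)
Your proof is correct and follows essentially the same approach as the paper: both apply Laplace's expansion along the first $m$ columns and use $\det M \neq 0$ to locate a row selection $\mathbf{r}$ for which both complementary minors are nonzero. The paper phrases this as a contradiction (if no permutation worked, every term would vanish, forcing $\det M = 0$), while you argue directly, but the underlying idea is identical.
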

\begin{proof}
Assume, for the sake of contradiction, that no permutation of the rows of $M$ brings it to the desired form.
Then, by plugging ${\bf c} = (0,1,2,\dotsc,m-1)$ into Laplace's Expansion Theorem (Lemma \ref{lemma_laplace_expansion}),
we conclude that $det(M)$ is a sum of terms of the form $\pm det(A)det(D)$ such that in each term either
$det(A)$ is zero or $det(D)$ is zero.
Therefore $det(M)=0$ in contradiction the fact that $M$ is invertible. 
\end{proof}

\begin{lemma} \label{lemma_vector_spaces_in_faithful_mode}
Let $V$ be a vector space of finite dimension and let $V_1,\dotsc,V_n$ be subspaces of $V$ such that:
\begin{enumerate}
\item $\cap_{i=1}^n V_i = \{0\}$.
\item For any $1 \leq i \leq n$ we have $\cap_{j \neq i} V_j \neq \{0\}$.
\item For any $1 \leq i \leq n$ we have $dim(V_i) = dim(V)-1$.
\end{enumerate}
Then, $n=dim(V)$ and there exists a basis $\{v_1,\dotsc,v_n\}$ of $V$ such that for any $1 \leq i \leq n$,
$V_i = span\{v_j \mid 1 \leq j \leq n \wedge j \neq i\}$
\end{lemma}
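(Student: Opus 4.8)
The plan is to build the desired basis directly out of hypothesis (2) and then fix the value of $n$ by a dimension count. First, for each $1 \le i \le n$, hypothesis (2) lets me choose a nonzero vector $v_i \in \bigcap_{j \ne i} V_j$. I would immediately note that $v_i \notin V_i$: otherwise $v_i$ would lie in $\bigcap_{j=1}^n V_j = \{0\}$ by hypothesis (1), contradicting $v_i \ne 0$. Thus $v_i \in V_j$ if and only if $j \ne i$.

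Next I would show that $\{v_1,\dotsc,v_n\}$ is linearly independent. Suppose $\sum_{i=1}^n a_i v_i = 0$ and fix an index $k$. Since $v_i \in V_k$ for every $i \ne k$, the vector $a_k v_k = -\sum_{i \ne k} a_i v_i$ lies in $V_k$; but $v_k \notin V_k$, so $a_k = 0$. As $k$ was arbitrary, all coefficients vanish, so the $v_i$ are independent and in particular $n \le \dim V$. For the reverse inequality I would use subadditivity of codimension: $\operatorname{codim}\big(\bigcap_{i=1}^n V_i\big) \le \sum_{i=1}^n \operatorname{codim}(V_i) = n$ by hypothesis (3), while by hypothesis (1) the left-hand intersection is $\{0\}$, of codimension $\dim V$. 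Hence $\dim V \le n$, and combining the two bounds gives $n = \dim V$. So the independent set $\{v_1,\dotsc,v_n\}$ is actually a basis of $V$.

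Finally I would verify that this basis has the claimed property. Fix $i$ and set $W_i := \operatorname{span}\{v_j \mid 1 \le j \le n \wedge j \ne i\}$. Every generator of $W_i$ lies in $V_i$, so $W_i \subseteq V_i$; but $\dim W_i = n-1$ because the $v_j$ are independent, and $\dim V_i = \dim V - 1 = n-1$ by hypothesis (3), so $W_i = V_i$, which is exactly what is required.

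I do not expect a serious obstacle here; the one step that needs a little care is the linear-independence argument, since that is precisely what converts the ``no redundancy'' hypothesis (2) into genuine independence of the chosen vectors. As an alternative I would mention the dual-space route: write each hyperplane as $V_i = \ker \phi_i$ for a nonzero $\phi_i \in V^*$; then hypothesis (1) says $\{\phi_1,\dotsc,\phi_n\}$ spans $V^*$, hypothesis (2) says no proper subset spans $V^*$, so $\{\phi_i\}$ is a basis of $V^*$ (whence $n = \dim V$), and one takes $\{v_i\}$ to be the dual basis.
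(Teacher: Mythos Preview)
Your proof is correct and follows the same core strategy as the paper: choose a nonzero $v_i$ in each partial intersection $\bigcap_{j\ne i}V_j$, verify $v_i\in V_j\Leftrightarrow j\ne i$, show the $v_i$ form a basis, and conclude $V_i=\operatorname{span}\{v_j\mid j\ne i\}$ by a dimension count. The details differ slightly: the paper first pins down $\dim\bigl(\bigcap_{j\ne i}V_j\bigr)=1$ via the dimension formula and then argues independence from that, and it obtains $n=\dim V$ by a spanning-by-contradiction argument (pick $w\notin\operatorname{span}\{v_i\}$ and force $w\in\bigcap_i V_i$); you instead prove independence directly from $v_k\notin V_k$ and obtain $\dim V\le n$ from subadditivity of codimension. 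Both routes are standard, but yours is a bit more streamlined, and the dual-basis alternative you mention is a nice conceptual rephrasing.
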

\begin{proof}
First, for any $1 \leq i \leq n$, we have $1 \leq dim(\cap_{j \neq i}V_j) =
\underbrace{dim(V_i + \cap_{j \neq i}V_j)}_{\leq dim(V)} -
\underbrace{dim(V_i)}_{=dim(V)-1}+
\underbrace{dim(\cap_{j=1}^n V_j)}_{=0} \leq 1$,
where the first inequality is due to hypothesis $(2)$.
Thus $dim(\cap_{j \neq i}V_j) = 1$.
Second, for any $1 \leq i \leq n$ we have $(\cap_{j \neq i}V_j) \cap span(\cup_{j \neq i}\cap_{k \neq j}V_k)
\subset (\cap_{j \neq i}V_j) \cap V_i = \cap_{j=1}^n V_j = \{0\}$.
Combining these 2 facts we conclude that there exists a linearly independent set $\{v_1,\dotsc,v_n\} \subset V$
such that for any $1 \leq i \leq n$ we have $\cap_{j \neq i}V_j = span\{v_i\}$.
Now, for any $1 \leq i \leq n$ we have $span\{v_j \mid j \neq i\} = \sum_{j \neq i}\cap_{k \neq j}V_k
\subset V_i$.
Thus, for any $1 \leq i \leq n$ we have $v_i \not\in V_i$, because otherwise we would have
$v_i \in \cap_{j=1}^nV_j = \{0\}$, a contradiction.
To summarize, we have found a linearly indepdendent set $\{v_1,\dotsc,v_n\} \subset V$ such that for any $1 \leq i, j \leq n$ it holds that $v_j \in V_i \Leftrightarrow i \neq j$.
Assume, for the sake of contradiction, that $\{v_1,\dotsc,v_n\}$ does not span $V$ and let
$w \in V$ be such that $w \not\in span\{v_1,\dotsc,v_n\}$.
Thus, for any $1 \leq i \leq n$ we have $w \in V_i$, because otherwise we would have
$dim(V)-1 = dim(V_i) = dim(V_i \oplus span\{v_i, w\}) - dim(span\{v_i, w\}) \leq div(V) - 2$, a contradiction.
So $\{v_1,\dotsc,v_n\}$ is a basis for $V$ and therefore $n=dim(V)$ and for each $1 \leq i \leq n$
we have $V_i = span\{v_j \mid j \neq i\}$ as desired.
\end{proof}

\section{Acknowledgements}
I would like to express my gratitude to my advisor Professor Alexander Lubotzky for directing me and for reading and correcting my work.
I would also like to thank Jack Schmidt of the University of Kentucky for answering some of my questions through
the internet.

\end{document}